% irreg symmetric 0-1 with X excluded
% version after considering first referee's report

\documentclass[12pt]{article}
%123456789012345678901234567890123456789012345678901234567890123456789

\usepackage{amsthm,amsmath,amssymb}
\usepackage[noadjust,sort]{cite}

\usepackage{levelsubs}

\date{}

\setlength{\textwidth}{6.3in}
\setlength{\textheight}{8.7in}
\setlength{\topmargin}{0pt}
\setlength{\headsep}{0pt}
\setlength{\headheight}{0pt}
\setlength{\oddsidemargin}{0pt}
\setlength{\evensidemargin}{0pt}
\setlength{\parskip}{0.5ex}
\normalsize

\newtheorem{thm}{Theorem}
\newtheorem{cor}[thm]{Corollary}
\newtheorem{lemma}[thm]{Lemma}

% Miscellaneous maths
\def\dfrac#1#2{\lower0.15ex\hbox{\large$\frac{#1}{#2}$}}
\def\({\bigl(}
\def\){\bigr)}

\def\sumppd{\mathop{\sum\nolimits'}\limits}
\def\sumpp{\sum'}
\def\sumjk{\sum_{1\le j<k\le n}}
\def\dmax{d_{\mathrm{max}}}
\def\xmax{x_{\mathrm{max}}}

% our summation symbol
\def\c{{\mathord{\ast}}}       %% sum j=1..n or k=1..n

% matrices

\def\II{\boldsymbol{I}}
\def\XX{\boldsymbol{X}}
\def\VV{\boldsymbol{V}}
\def\UU{\boldsymbol{U}}
\def\YY{\boldsymbol{Y}}
\def\Zero{\boldsymbol{0}}

\def\barXX{{\mkern 5mu\overline{\mkern-5mu\XX\mkern-3mu}\mkern 3mu}}

% Script
\def\A{{\cal A}}

\def\R{{\cal R}}
\def\S{{\cal S}}
\def\T{{\cal T}}

%  Greek
\let\eps=\varepsilon

\def\Deltait{\mathit{\Delta}}
\def\Thetait{\mathit{\Theta}}

% Vectors
\def\avec{\boldsymbol{a}}
\def\dvec{\boldsymbol{d}}
\def\xvec{\boldsymbol{x}}
\def\yvec{\boldsymbol{y}}
\def\zvec{\boldsymbol{z}}
\def\thetavec{\boldsymbol{\theta}}

% Variables for integration lemma
\def\mw#1{\hat{#1}}
\def\mwA{\mw{A}}
\def\mwB{\mw{B}}
\def\mwC{\mw{C}}
\def\mwD{\mw{D}}
\def\mwE{\mw{E}}
\def\mwF{\mw{F}}
\def\mwG{\mw{G}}
\def\mwH{\mw{H}}
\def\mwI{\mw{I}}
\def\mwJ{\mw{J}}
\def\mwZ{\mw{Z}}
\def\mwn{N}
\def\mwa{\mw{a}}

% Variant O( ) notation:
\def\OO{\widetilde{O}}

% Various
\def\miss{\operatorname{miss}}
\def\hit{\operatorname{hit}}
\def\num{\operatorname{num}}

\def\Im{\operatorname{Im}}

\def\Prob{\operatorname{Prob}}
\def\Reals{{\mathbb{R}}}

\def\abs#1{\lvert#1\rvert} \let\card=\abs

\def\suchthat{\mathrel{:}}
\def\Suchthat{\mathrel{:}}
\def\nicebreak{\vskip 0pt plus 50pt\penalty-300\vskip 0pt plus -50pt }

\begin{document}

\title {Subgraphs of dense random graphs\\ with specified degrees}

\author{
Brendan~D.~McKay\vrule width0pt height2ex\thanks
 {Research supported by the Australian Research Council.}\\
\small School of Computer Science\\[-0.8ex]
\small Australian National University\\[-0.8ex]
\small Canberra ACT 0200, Australia\\[-0.3ex]
\small\texttt{bdm@cs.anu.edu.au}
}

\maketitle

%\vspace*{-5mm}
\begin{abstract}
Let $\dvec=(d_1,d_2,\ldots, d_n)$ be a vector of non-negative
integers with even sum.
We prove some basic facts about the structure of a 
random graph with degree sequence~$\dvec$, including
the probability of a given subgraph or induced subgraph.

Although there are many results of this kind, they are
restricted to the sparse case with only a few exceptions.
Our focus is instead on the case where the average degree
is approximately a constant fraction of~$n$.

Our approach is the multidimensional saddle-point method.
This extends the enumerative work of McKay and Wormald
(1990) and is analogous to the theory developed for
bipartite graphs by Greenhill and McKay (2009).
\end{abstract}

%\nicebreak
\section{Introduction}\label{s:intro}

Let $\dvec=(d_1,d_2,\ldots,d_n)$ be a vector of non-negative
integers with even sum.
Let $\XX=(x_{jk})$ be a symmetric
$n\times n$ matrix over $\{0,1\}$ with zero diagonal.
Define
$G(\dvec,\XX)$ to be the number of $n\times n$ symmetric matrices
$\A=(a_{jk})$ over $\{ 0,1\}$ with zero diagonal, such that\\
(i) row $j$ sums to $d_j$, for $1\le j\le n$;\\
(ii) $a_{jk}=0$ whenever $x_{jk}=1$, for $1\le j,k\le n$.

Equivalently,
$G(\dvec,\XX)$ is the number of labelled simple graphs with
$n$ vertices of degree $d_1,d_2,\ldots, d_n$, having no edges
in common with the simple graph~$\XX$.
The special case where $\XX$ is the zero matrix $\Zero$ will also
be denoted $G(\dvec)$.
Define $\xvec=(x_1,x_2,\ldots,x_n)$, where $x_j$ is the sum of
the $j$th row of~$\XX$.

One motive for interest in $G(\dvec,\XX)$ is that the ratio
$G(\dvec,\XX)/G(\dvec)$ is the probability that a random simple
graph with degree sequence $\dvec$ has no edge in common with $\XX$.
Similarly, $G(\dvec-\xvec,\XX)/G(\dvec)$ is the probability
that $\XX$ appears as a subgraph.
In these cases, and throughout the paper,
probability spaces have the uniform distribution.

Define the matrix $\barXX=(\bar x_{jk})$ over $\{ 0,1\}$ with
$\bar x_{jk}=1$ iff $j\ne k$ and $x_{jk}=0$.
For convenience we will adopt the convention that
$\sum_{jk\in\XX}$ means the sum over all $\{j,k\}$ such that
$x_{jk}=1$, and similarly $\sum_{jk\in\barXX}$ means the sum over
all $\{j,k\}$ such that $\bar x_{jk}=1$.
Note that the equal sets $\{j,k\}$ and $\{k,j\}$ do not appear as
separate terms in these sums.

Define the following key parameters.
\def\fixwid#1{\hbox to 5em{$\displaystyle#1$\hss}}
\begin{align*}
  E &= \fixwid{\dfrac12 \sum_{j=1}^n d_j}
          \text{\quad(the number of edges)} \\
  d &= \fixwid{\frac{2E}{n}} \text{\quad(the average degree)} \\
  \lambda &= \fixwid{\frac{d}{n-1}} \text{\quad(the density ignoring the diagonal)} \\
  A   &= \tfrac12 \lambda (1{-}\lambda) \\
  X   &= \fixwid{\dfrac12\, \sum_{j=1}^n x_j} \text{\quad(the number of
    edges of $\XX$)}\\
  \delta_j &= d_j-d+\lambda x_j\quad(1\le j\le n)
\end{align*}

Direct asymptotic estimation of $G(\dvec,\XX)$ for nonzero $\XX$
has been previously restricted to the sparse range.
For representative results with bounded or very slowly growing degrees,
see Bollob\'as and McKay~\cite{Bollobas} and
Wormald~\cite{WormaldSurvey}.
For somewhat higher degrees we have the following.
Let $\dmax=\max_j d_j$, $\xmax=\max_j x_j$ and
$\Deltait=\dmax(\dmax+\xmax)$.

\begin{thm}[\cite{Perth}]\label{perth}
Suppose $\dmax\ge 1$ and $\Deltait=o(E)$.
Then, as $n\to\infty$,
\begin{align*}
  G(\dvec,\XX) = \frac{(2E)!}{E!\,2^E\prod_{j=1}^n d_j!}
   & \exp\biggl( -\frac{\sum_{j=1}^n d_j(d_j{-}1)}{4E}
     - \frac{\(\sum_{j=1}^n d_j(d_j{-}1)\)^2}{16E^2} \\
    &{\kern30mm} - \frac{\sum_{jk\in\XX} d_jd_k}{2E}
        + O(\Deltait^2/E)
   \biggr).
\end{align*}
\end{thm}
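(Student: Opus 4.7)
The strategy is the configuration (pairing) model. Let $\Phi(\dvec)$ denote the set of perfect matchings on $2E$ labelled points partitioned into $n$ cells of sizes $d_1,\ldots,d_n$; then $\lvert\Phi(\dvec)\rvert=(2E)!/(E!\,2^E)$. Each simple graph with degree sequence $\dvec$ corresponds to exactly $\prod_j d_j!$ elements of $\Phi(\dvec)$ under the natural projection sending each pair of points to an edge between their cells, so the leading factor $(2E)!/\bigl(E!\,2^E\prod_j d_j!\bigr)$ in the statement equals $\lvert\Phi(\dvec)\rvert/\prod_j d_j!$. It therefore remains to show that the fraction of $\Phi(\dvec)$ that projects to a simple graph avoiding~$\XX$ equals
\[
  \exp\Bigl(-\tfrac{S}{4E}-\tfrac{S^2}{16E^2}-\tfrac{T}{2E}+O(\Deltait^2/E)\Bigr),
\]
where $S=\sum_{j=1}^n d_j(d_j{-}1)$ and $T=\sum_{jk\in\XX}d_jd_k$.

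A pairing in $\Phi(\dvec)$ fails to correspond to an $\XX$-avoiding simple graph exactly when it contains a \emph{bad pair} of one of three kinds: a \emph{loop} (both endpoints in one cell), a \emph{repeat} (a second pair between the same pair of cells), or an \emph{$\XX$-pair} (endpoints in cells $j,k$ with $x_{jk}=1$). The plan is to apply the switching method of McKay and Wormald to each kind in turn. For each kind, let $N_i$ denote the number of pairings with exactly $i$ bad pairs of that kind (and none of the other two kinds, in the joint version). I would define a local rewiring of a few pairs that destroys one bad pair without creating any other bad pair, count the forward and backward switchings between $N_i$ and $N_{i-1}$, and derive an estimate of the form $N_i/N_{i-1}=\mu/i+O(\Deltait/E)$, with $\mu$ the appropriate mean; iterating across $i$ then yields the multiplicative factor $e^{-\mu}$. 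The means are $\mu=S/(4E)$ for loops, $S^2/(16E^2)$ for repeats, and $T/(2E)$ for $\XX$-pairs, recovering the classical Bender--Canfield expansion for loops and repeats (as refined in \cite{Perth}) and producing the new $\XX$-dependent factor.

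The main obstacle will be carrying out the three switching analyses jointly so that the absence of all three kinds of bad pairs factorises to within the claimed error $O(\Deltait^2/E)$. Each argument is fairly standard in isolation, but combining them requires showing the three bad-pair counts are asymptotically independent to second order; I would do this either by performing the switchings in a prescribed order on a common filtration of $\Phi(\dvec)$, or by computing the joint factorial moments of the triple (loops, repeats, $\XX$-pairs) and applying Brun's sieve. The hypothesis $\Deltait=\dmax(\dmax+\xmax)=o(E)$ enters precisely at this point: any switching that attaches a free half-edge to a point in cell $j$ must avoid both the at most $\dmax$ cells already adjacent to $j$ (to prevent creating a loop or repeat) and the $x_j\le\xmax$ cells forbidden by~$\XX$ (to prevent creating an $\XX$-pair), which is what forces the summand $\dmax+\xmax$ in $\Deltait$ and ensures a relative switching error of $1+O(\Deltait/E)$ per step.
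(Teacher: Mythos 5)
This theorem is not proved in the paper at all---it is quoted from \cite{Perth}---so there is no internal proof to compare against; your configuration-model-plus-switchings outline is essentially the method of that cited source (pairing model, bad pairs of three kinds---loops, repeats, forbidden $\XX$-pairs---removed by switchings), and your identification of the three Poisson means $S/(4E)$, $S^2/(16E^2)$, $T/(2E)$ and of where the hypothesis $\Deltait=o(E)$ enters is correct. Note only that your plan is an outline: the forward/backward switching counts and the joint error accounting that turns the per-step relative error $O(\Deltait/E)$ into the stated $O(\Deltait^2/E)$ are announced rather than carried out, but the approach is the right one.
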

The error term in Theorem~\ref{perth} is $o(1)$ only under the
stronger condition that $\Deltait^2=o(E)$,
which implies that the graphs are quite sparse.
The special case $G(\dvec)$ was determined by McKay
and Wormald~\cite{MWsparse} under the weaker condition
$\dmax^3=o(E)$.

The probability $G(\dvec,\XX)/G(\dvec)$ of being edge-disjoint
from $\XX$, and the probability
$G(\dvec-\xvec,\XX)/G(\dvec)$ of containing $\XX$
as a subgraph are easily deduced from Theorem~\ref{perth}.
They can also be found directly over a sometimes
wider range of $\dvec$ values.  Let $(a)_b$ denote the falling
factorial.
The following is a consequence
of Theorems 2.9 and 2.10 of McKay~\cite{McKay81}.

\begin{thm}[\cite{McKay81}]\label{mckay81}
If\/ $\Deltait+X = o(E)$ then, as $n\to\infty$,
\[
 \frac{G(\dvec-\xvec,\XX)}{G(\dvec)}
   = \frac{\prod_{j=1}^n (d_j)_{x_j}}{2^X(E)_X}
     \,\exp\( O(\Delta X/E) \).
\]
\end{thm}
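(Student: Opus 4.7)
The plan is to interpret $G(\dvec-\xvec,\XX)/G(\dvec)$ as the probability $\Prob(\XX\subseteq G)$ that a uniformly random labelled simple graph $G$ with degree sequence $\dvec$ contains $\XX$ as a subgraph, and then estimate this probability by the switching method applied edge by edge. Fixing an ordering $e_1,\ldots,e_X$ of the edges of $\XX$ and setting $\XX_i=\{e_1,\ldots,e_i\}$, I write
\[
  \frac{G(\dvec-\xvec,\XX)}{G(\dvec)}
    = \prod_{i=1}^X \Prob\bigl(e_i\in G \mid \XX_{i-1}\subseteq G\bigr).
\]

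For the $i$-th factor, with $e_i=\{j,k\}$, I would split the $\dvec$-graphs containing $\XX_{i-1}$ into $\S_1$ (those that also contain $e_i$) and $\S_0$ (those that do not), and use the standard $2$-edge switching that removes $\{j,j'\},\{k,k'\}$ and inserts $\{j,k\},\{j',k'\}$, subject to $j,k,j',k'$ distinct, the removed edges lying outside $\XX_{i-1}$, and no multi-edges created. Double-counting the switching pairs gives $|\S_1|/|\S_0|=\bar f/\bar b$, where a routine enumeration yields $\bar f\sim(d_j-x_j^{(i-1)})(d_k-x_k^{(i-1)})(1-\lambda)$ and $\bar b\sim 2(E-i+1)(1-\lambda)^2$, with $x_v^{(i-1)}$ the number of edges of $\XX_{i-1}$ incident to vertex~$v$. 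Since $|\S_1|/|\S_0|=O(\dmax^2/E)=o(1)$, the switching ratio agrees with $\Prob(e_i\in G\mid\XX_{i-1}\subseteq G)$ up to relative error $o(1)$. Taking the product over~$i$ and collecting the contributions of each vertex gives the leading behaviour $\prod_j(d_j)_{x_j}\big/\bigl(2^X(E)_X(1-\lambda)^X\bigr)$; the spurious factor $(1-\lambda)^{-X}=\exp\bigl(X\lambda+O(X\lambda^2)\bigr)$ lies within $\exp(O(\Deltait X/E))$ because $\lambda=d/(n-1)\le\dmax^2/E\le\Deltait/E$, so it is absorbed into the claimed exponential error.

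The main obstacle is to control the switching errors uniformly across $G\in\S_0\cup\S_1$, rather than merely in expectation. Each step introduces multiplicative corrections of a few kinds: from common neighbours of $j$ and $k$ in $G$ (where $j'=k'$ would be forced), from the probabilities of the non-adjacency constraints $\{j',k'\}\notin G$, $\{j,j'\}\notin G$, $\{k,k'\}\notin G$ differing from $(1-\lambda)$ for specific $G$, and from the requirement that the removed edges avoid $\XX_{i-1}$. Each such correction is of order $\dmax/E$ or $\dmax\xmax/E$, giving a per-step multiplicative error $1+O(\Deltait/E)$; multiplied over the $X$ steps this aggregates to $\exp(O(\Deltait X/E))$, matching the theorem. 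The hypothesis $\Deltait+X=o(E)$ is used in each step to guarantee that $|\S_1|/|\S_0|=o(1)$, that $E-i+1=E(1+o(1))$ uniformly in~$i$, and that the accumulated exponent remains of the stated order.
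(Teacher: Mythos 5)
First, a point of comparison: the paper itself does not prove Theorem~\ref{mckay81}; it is quoted from~\cite{McKay81}, where it is deduced from switching-based estimates (Theorems 2.9 and 2.10 of that paper). Your overall plan --- the chain rule over the edges of $\XX$ together with a two-edge switching at each step --- is the same family of techniques as the cited source, and your main-term bookkeeping is correct: the factors $(d_j-x_j^{(i-1)})(d_k-x_k^{(i-1)})$ telescope to $\prod_j(d_j)_{x_j}$, the denominators to $2^X(E)_X$, and the spurious $(1-\lambda)^{-X}$ is indeed absorbable since $\lambda\le\Deltait/E$.

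The genuine gap is in the error control, which is the entire content of the theorem. You assert that each per-step correction is ``of order $\dmax/E$ or $\dmax\xmax/E$'', uniformly over $G\in\S_0\cup\S_1$, hence a per-step factor $1+O(\Deltait/E)$. What a worst-case count actually gives for the backward switching at $e_i=\{j,k\}$ is $b(G') = d_j'd_k' - O\bigl(\min(d_j',d_k')(\dmax+\xmax)\bigr)$, where $d_v'=d_v-x_v^{(i-1)}$: the deductions for common neighbours of $j$ and $k$, for pairs $j'\sim j$, $k'\sim k$ with $\{j',k'\}\in G'$, and for $\{j',k'\}\in\XX_{i-1}$ are additive of size up to $\min(d_j',d_k')(\dmax+\xmax+1)$, i.e.\ a \emph{relative} error $O\bigl((\dmax+\xmax)/\max(d_j',d_k')\bigr)$, not $O(\Deltait/E)$. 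In the sparse regime covered by the theorem (for instance bounded degrees) this is of order $1$, and for particular graphs $G'$ the correction can wipe out the main term entirely (every neighbour of $j$ adjacent to $k$), so uniform control at strength $O(\Deltait/E)$ is simply false. What is true, and what the proof requires, is that these corrections are $O(d_j'd_k'\Deltait/E)$ \emph{on average} over the conditioned family; but those averages are themselves subgraph-probability estimates of the same kind as the theorem being proved (probabilities of paths of length two and of pairs of edges in the conditioned random graph), so the argument needs a bootstrapping or inductive layer --- crude worst-case upper bounds on such configuration probabilities first, then the refined ratio --- which is exactly the machinery of \cite{McKay81} and is absent from your proposal. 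Relatedly, the factors $(1-\lambda)$ and $(1-\lambda)^2$ in your $\bar f$ and $\bar b$ come from an independent-edge heuristic rather than from counting; they are numerically harmless here, but they signal that the per-graph counts have been replaced by precisely the independence assumption the switching argument is meant to avoid.
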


In the case of dense matrices, the first asymptotically precise
enumeration result was that of McKay and Wormald~\cite{MWreg}, who
proved Theorem~\ref{bigtheorem} (below) in the case of
\hbox{$\XX=\Zero$} with a slightly weaker error term.
This has been extended by Barvinok and Hartigan~\cite{BH}.  They
identify a symmetric matrix $(\lambda_{jk})$ over $\{0,1\}$
introduced in~\cite{MWreg}
(and used in greater generality in Section~\ref{s:calculations}) as the
matrix that maximises a certain entropy function.  They then express
the asymptotic value of $G(\dvec)$ as an effectively computable
function of $(\lambda_{jk})$ provided the values $\{\lambda_{jk}\}_{j\ne k}$
are uniformly bounded away from~0 and~1.  This forces the average
degree to be $\Theta(n)$ but allows a much greater variation of degrees
than we allow.  They also show that $(\lambda_{jk})$ matches a typical
graph with degree sequence $\dvec$ in a sense that we will describe in
Section~\ref{s:subgraphs}.  This theme is explored in a somewhat
different way by Chatterjee, Diaconis and Sly~\cite{CPA}.

Despite the absence of precise enumerative results for densities between
$o(n^{-1/2})$ and $c/\log n$, Krivelevich, Sudakov, Vu and
Wormald~\cite{KSVW01} determined several almost-sure properties of
random regular graphs over various ranges of density using a combination
of switchings and analysis.
Other such properties were determined by Boldi and Vigna~\cite{Boldi},
and Cooper, Frieze, Reed and Riordan~\cite{CFR,CFRR}.

More recently, Krivelevich, Sudakov and Wormald~\cite{KSW09}
determined the probability of small induced subgraphs in random
regular graphs of degree $(n-1)/2$ under some conditions on the
order and degree sequence of the subgraph.

The corresponding problems for bipartite graphs and digraphs were studied
by Greenhill and McKay~\cite{GMX}; see that paper for a bibliography.
The proof method in~\cite{GMX} is quite similar to that here.

\medskip

We will also have need for the following additional parameters, for
$1\le j\le n$ and $\ell,m\ge 1$.
\begin{align*}
%  B_j &= \sum_{k\in X_j} \delta_k, &
  B_j &= \sum_{k|jk\in\XX} \delta_k, &
  R &= \sum_{j=1}^n (d_j-d)^2, \\
  R_\ell &= \sum_{j=1}^n \delta_j^\ell, &
  X_\ell &= \sum_{j=1}^n x_j^\ell, \displaybreak[0]\\
  D &= \sum_{jk\in\XX} \delta_j\delta_k, &
  H &= \sum_{jk\in\XX} x_jx_k, \\
  L &= \sum_{jk\in\XX} (\delta_j-x_j)(\delta_k-x_k), &
  C_{\ell,m} &= \sum_{j=1}^n \delta_j^\ell x_j^m, \\
  K &= \sum_{jk\in\XX} (d_j-d)(d_k-d).
\end{align*}

To calibrate and motivate our main enumeration result, we first
develop a na\"\i ve estimate of $G(\dvec,\XX)$ by extending an
idea introduced in~\cite{MWreg}.
Generate a random graph by independently creating an edge
$jk$ with probability $\lambda$ for each $jk\in\barXX$.
Each graph with $E$ edges (none in common with $\XX$)
appears with probability $\lambda^E(1-\lambda)^{\binom n2-X-E}$.
Moreover, the event $E_j$ that vertex $j$ has degree $d_j$ has
probability 
$\binom{n-1-x_j}{d_j}\lambda^{d_j}(1-\lambda)^{n-1-x_j-d_j}$
for each~$j$.  If we (incorrectly) assume that the events
$E_1,\ldots,E_n$ are independent, we obtain a guess for
$G(\dvec,\XX)$ as follows:
\begin{equation}\label{naive}
  \widehat G(\dvec,\XX) = 
     (1-\lambda)^{-X}
     \(\lambda^\lambda (1-\lambda)^{1-\lambda}\)^{\binom n2}
     \prod_{j=1}^n \binom{n{-}1{-}x_j}{d_j}.
\end{equation}
In~\cite{MWreg} it was proved that
\[
    G(\dvec) =\sqrt 2\,\widehat G(\dvec,\Zero)
  	\exp\biggl( \,\frac14 - \frac{R^2}{16A^2n^4}
	   + o(1) \biggr).
\]
under certain conditions on $\dvec$.
Our main result extends this to nonzero~$\XX$.

%\begin{thm}\label{bigtheorema}
%Let $a,b>0$ be constants such that $a+b<\frac12$.  
%For some $\eps>0$, suppose that $d_j-d$ and $x_j$ are
%uniformly $O(n^{1/2+\eps})$ for $1\le j\le n$,
%and that $X=O(n^{1+2\eps})$.
%For sufficiently large $n$, suppose that 
%\[
%     \min\{d,n-d-1\} \ge \frac {n}{3a\log n}.
%\]
%Then, provided $\eps$ is small enough, we have
%\begin{align*}
%  G(\dvec,\XX) &=\sqrt 2\,\widehat G(\dvec,\XX)
%  	\exp\biggl( \,\frac14 - \frac{R^2}{16A^2n^4}
%		+ \frac{\lambda X^2}{(1{-}\lambda)n^2}
%		- \frac{D}{2An^2} + O(n^{-b}) \biggr).
%\end{align*}
%\end{thm}

\begin{thm}\label{bigtheorem}
Let $a,b>0$ be constants such that $a+b<\frac12$. 
Then there is a constant $\eps=\eps(a,b)>0$ such that the
following holds.
Suppose that $d_j{-}d,x_j=O(n^{1/2+\eps})$
uniformly for $1\le j\le n$,
that $X=O(n^{1+2\eps})$, and that
%Suppose that 
\[
     \min\{d,n-d-1\} \ge \frac {n}{3a\log n}.
\]
for sufficiently large $n$. Then, as $n\to\infty$,
\begin{align*}
  G(\dvec,\XX) &=\sqrt 2\,\widehat G(\dvec,\XX)
  	\exp\biggl( \,\frac14 - \frac{R^2}{16A^2n^4}
		+ \frac{\lambda X^2}{(1{-}\lambda)n^2}
		- \frac{D}{2An^2} + O(n^{-b}) \biggr).
\end{align*}
\end{thm}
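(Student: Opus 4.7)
The plan is to apply the multidimensional saddle-point method, extending the $\XX=\Zero$ case treated in~\cite{MWreg} and following the template of~\cite{GMX}.  Since $G(\dvec,\XX)$ is the coefficient of $z_1^{d_1}\cdots z_n^{d_n}$ in $\prod_{jk\in\barXX}(1+z_jz_k)$, Cauchy's formula on circles of common radius $\rho=\sqrt{\lambda/(1-\lambda)}$ expresses it as
\[
  G(\dvec,\XX) = \frac{\rho^{-2E}}{(2\pi)^n}\int_{[-\pi,\pi]^n}
      \prod_{jk\in\barXX}\bigl(1+\rho^2 e^{i(\theta_j+\theta_k)}\bigr)
      \prod_{j=1}^n e^{-id_j\theta_j}\,d\thetavec.
\]
The radius $\rho$ is chosen so that $\thetavec=\Zero$ is a saddle point; the goal is to show that the integral concentrates near the origin and reduces, after Gaussian evaluation and correction, to $\widehat G(\dvec,\XX)$ times the claimed exponential factor.

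I would then split the integration region into a ``box'' $\B=\{\thetavec:\abs{\theta_j}\le n^{-1/2+\eps'}\}$ for some $\eps'$ slightly larger than $\eps$, and its complement.  Outside $\B$, iterating the estimate $\log\abs{1+\rho^2 e^{i\phi}}\le \log(1+\rho^2)-c\lambda(1-\lambda)\phi^2$ for $\abs{\phi}\le\pi$ over the pairs in $\barXX$ yields exponential decay; the density hypothesis, which forces $\lambda(1-\lambda)\gg 1/(\log n)^2$, is precisely what makes this decay fast enough to absorb the outside contribution into the $O(n^{-b})$ error.

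Inside $\B$ the logarithm of the integrand is Taylor-expanded in $\thetavec$ through fourth order.  The quadratic form has matrix approximately $A(n\II+J-\II)$ plus a low-rank correction supported on the edges of $\XX$; it is inverted by a Sherman--Morrison-type split isolating the all-ones eigenvector from its complement.  The linear piece $-i\sum_j d_j\theta_j$ is removed by shifting the Gaussian mean, contributing an exponential factor in the $\delta_j$.  Carrying out the shifted Gaussian integral and Wick-contracting the surviving cubic and quartic corrections reproduces the constants $\tfrac14$ and $-R^2/(16A^2n^4)$ from~\cite{MWreg}, while the new contributions $\lambda X^2/((1-\lambda)n^2)$ and $-D/(2An^2)$ arise, respectively, from cross-contractions mixing $x_j$ with the degree-shift variables, and from the $\XX$-restricted bilinear component of the inverse quadratic form.

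The principal obstacle is the bookkeeping: every higher moment and cross-contraction produced by the expansion must be shown to be $O(n^{-b})$ uniformly over the allowed ranges $d_j-d,x_j=O(n^{1/2+\eps})$ and $X=O(n^{1+2\eps})$.  This is the origin of the restriction $a+b<\tfrac12$, which compels $\eps$ to be small enough that the growth of $R_\ell$, $C_{\ell,m}$, $H$ and related quantities is dominated, after division by appropriate powers of $An^2\asymp n^2/(\log n)^2$, by the target error $n^{-b}$.  The bipartite analysis of~\cite{GMX} furnishes a template, but the undirected symmetry here ties rows and columns to a single vector $\thetavec$, demanding more delicate handling in both the boxing estimate and the Gaussian inversion.
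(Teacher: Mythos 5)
Your overall strategy --- Cauchy integral, concentration in a small box around a saddle point, fourth-order Taylor expansion, diagonalization and Gaussian evaluation, with the new terms $\lambda X^2/((1{-}\lambda)n^2)$ and $-D/(2An^2)$ traced to the $\XX$-dependent parts of the expansion --- matches the paper's. But there is one genuine gap. The integrand has a \emph{second} saddle point at $\thetavec=(\pi,\ldots,\pi)$, where it takes exactly the same value as at the origin: the map $\thetavec\mapsto\thetavec+(\pi,\ldots,\pi)$ leaves the integrand invariant because $\sum_j d_j$ is even. Your tail bound $\log\abs{1+\rho^2e^{i\phi}}\le\log(1+\rho^2)-c\lambda(1-\lambda)\phi^2$ holds only for $\abs{\phi}\le\pi$, whereas $\phi=\theta_j+\theta_k$ ranges over $[-2\pi,2\pi]$; near $(\pi,\ldots,\pi)$ every $\theta_j+\theta_k$ is close to $\pm2\pi$ and there is no decay at all. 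As written, your boxing argument fails in that region, and omitting its contribution loses a factor of $2$ in the answer (the $\sqrt2$ in the statement is $2\times2^{-1/2}$, the $2^{-1/2}$ coming from the near-degenerate eigenvalue of the quadratic form in the all-ones direction).

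A second point is organizational but substantially underestimated in your sketch. You take a common radius $\rho=\sqrt{\lambda/(1-\lambda)}$ and remove the residual linear term $-i\sum_j\delta_j\theta_j$ (with $\delta_j$ as large as $n^{1/2+\eps}$) by shifting the Gaussian mean. The paper instead tunes individual radii $r_j$ so that the linear term (nearly) vanishes, solving the saddle-point equations $\lambda_{j\c|\barXX}=d_j$ approximately by a contraction iteration (Section~\ref{s:radii}); this is where the interaction of $\dvec$ with $\XX$ --- the quantities $B_j$, $D$, $C_{\ell,m}$ --- first enters. The two devices are equivalent in principle (an imaginary shift of $\theta_j$ is a change of $r_j$), but your shift has size $\delta_j/(2An)=\OO(n^{-1/2})$, comparable to the box width, so every cubic and quartic term generates non-negligible cross terms with the shift, and completing the square produces a factor of order $\exp(R_2/(4An))$, which is very far from $1+o(1)$ and must be shown to cancel against Stirling expansions of the binomials in $\widehat G(\dvec,\XX)$ to within $O(n^{-b})$. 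This is the bulk of the proof rather than ``bookkeeping,'' and the proposal gives no indication of how these cancellations would be organized.
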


\begin{proof}
The proof of this theorem is the main task of the paper.
Here we will summarize
the main phases and draw their conclusions together. 
The basic idea is to identify $G(\dvec,\XX)$ as a
coefficient in a multivariable generating function and to extract
that coefficient using the saddle-point method.
In Section~\ref{s:calculations},
we write $G(\dvec,\XX)=P(\dvec,\XX)I(\dvec,\XX)$,
where $P(\dvec,\XX)$ is a rational expression and
$I(\dvec,\XX)$
is an integral in $n$ complex dimensions.  Both depend on the
location of the saddle point, which is the solution of some nonlinear
equations.  Those equations are solved in Section~\ref{s:radii},
and this leads to the value of $P(\dvec,\XX)$ in~\eqref{Pvalue}.
In~Sections~\ref{s:integral}--\ref{s:diagonalize},
the integral $I(\dvec,\XX)$ is estimated in a superset of a
small region $\R$ enclosing the origin (defined in~\eqref{Rdef})
and an equivalent small region $\R'$ enclosing $(\pi,\ldots,\pi)$.
The result is given by Lemma~\ref{Fintegral}.
Finally, in Section~\ref{s:boxing},
we note that the integral restricted to the exterior
of $\R\cup\R'$ is negligible.  The present theorem thus follows
from~\eqref{start}, \eqref{Pvalue} and
Lemmas~\ref{Fintegral} and~\ref{boxing}.

The proof in various places requires $\eps$ to be
sufficiently small, but there are only a finite number of such
places so we can choose $\eps(a,b)$ to satisfy all of them
at once.   Note that the theorem remains true if $\eps(a,b)$
is decreased, since the conditions become stronger.
\end{proof}

\medskip

Throughout the paper, the asymptotic notation $O(f(n))$ refers
to the passage of~$n$ to~$\infty$.
%We also use a modified notation $\OO(f(n))$,
%which is to be taken as a shorthand for $O\(f(n)n^{O(1)\eps}\)$,
%where the $O(1)$ factor is uniform
%over~$\eps$ provided $\eps$ is small enough.
We also use a modified notation $\OO(f(n))$,
which is to be taken as a shorthand for any expression of
the form $O(f(n)n^{c\eps})$ with $c$ a numerical constant
(perhaps a different constant at each occurrence).
Under the assumptions of Theorem~\ref{bigtheorem}, we have
$\lambda^{-1},(1{-}\lambda)^{-1}=O(\log n)$.
% $8\le A^{-1}\le O(\log n)$, so $A^{-1}=\OO(1)$.
This implies, if $c_1,c_2,c_3,c_4$ are constants, that
$\lambda^{c_1}(1-\lambda)^{c_2}n^{c_3+c_4\eps}=\OO(n^{c_3})$.

\nicebreak
\section{Subgraph probabilities}
\label{s:subgraphs}

Define functions
$\miss(\dvec,\XX)$ and $\hit(\dvec,\XX)$ as follows. The
probability that a random simple graph with degrees $\dvec$ has
no edges in common with $\XX$ is
\[
    (1-\lambda)^X\miss(\dvec,\XX),
\]
and the probability that it includes $\XX$ as a subgraph is
\[
    \lambda^X\hit(\dvec,\XX).
\]
In this section, we apply Theorem~\ref{bigtheorem} to
estimate these probabilities.
To avoid unnecessary messiness regarding the value of $\eps$,
we will suppose $\eps(a,b)$ in Theorem~\ref{bigtheorem}
is chosen to be small enough to satisfy the finite number of
places in this section
where a statement is only true if $\eps$ is small enough.

\begin{thm}\label{subprobs}
Under the conditions of Theorem~\ref{bigtheorem}, we have
\begin{align*}
 \miss(\dvec,\XX) &= \exp\biggl(
    \frac{\lambda X}{(1{-}\lambda)n}
  + \frac{\lambda X_2}{2(1{-}\lambda)n}
  + \frac{\lambda(1{-}2\lambda)X_3}{6(1{-}\lambda)^2n^2} 
  + \frac{\lambda X^2}{(1{-}\lambda)n^2} 
  - \frac{D}{\lambda(1{-}\lambda)n^2} \\
  &{\kern 16mm}
  - \frac{C_{1,1}}{(1-\lambda)n}
  - \frac{(1{-}2\lambda)C_{1,2}}{2(1{-}\lambda)^2n^2}
  - \frac{C_{2,1}}{2(1{-}\lambda)^2n^2}		
		 + O(n^{-b}) \biggr) 
\intertext{and}
 \hit(\dvec,\XX) &= \exp\biggl(
   \frac{(1{-}\lambda)X}{\lambda n}
   - \frac{(1{+}\lambda)X_2}{2\lambda n}
   - \frac{(1{+}\lambda)(1{+}2\lambda)X_3}{6\lambda^2n^2}
   + \frac{(1{-}\lambda)X^2}{\lambda n^2}\\
   &{\kern 16mm}
   - \frac{L}{\lambda(1{-}\lambda)n^2}
   + \frac{C_{1,1}}{\lambda n}
   + \frac{(1{+}2\lambda)C_{1,2}}{2\lambda^2n^2}
   - \frac{C_{2,1}}{2\lambda^2n^2}
   	 + O(n^{-b}) \biggr).
\end{align*}
\end{thm}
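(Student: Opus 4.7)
The plan is to derive $\miss(\dvec,\XX)$ directly from Theorem~\ref{bigtheorem}, and then obtain $\hit(\dvec,\XX)$ by means of a complementation duality.

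For $\miss$, I begin with
\[
\miss(\dvec,\XX) = \frac{G(\dvec,\XX)}{(1-\lambda)^X\, G(\dvec)}
\]
and apply Theorem~\ref{bigtheorem} to both numerator and denominator. In the quotient, the prefactor $\sqrt{2}\,e^{1/4}$, the factor $(\lambda^\lambda(1-\lambda)^{1-\lambda})^{\binom{n}{2}}$ inside $\widehat G$, and the $R^2/(16A^2n^4)$ correction all cancel, the last one because $R$ depends only on $\dvec$. What remains, using $2A=\lambda(1-\lambda)$, is
\[
\miss(\dvec,\XX) = (1-\lambda)^{-2X}\prod_{j=1}^n \frac{\binom{n-1-x_j}{d_j}}{\binom{n-1}{d_j}}\, \exp\biggl(\frac{\lambda X^2}{(1-\lambda)n^2} - \frac{D}{\lambda(1-\lambda)n^2} + O(n^{-b})\biggr).
\]
The two displayed correction terms already match the target, so the remaining task is to expand the binomial product. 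Writing $\binom{n-1-x_j}{d_j}/\binom{n-1}{d_j}=\prod_{i=0}^{x_j-1}(1-d_j/(n-1-i))$ and pulling out $(1-\lambda)^{2X}$ leaves $\prod_j\prod_i(1-u_{j,i}/(1-\lambda))$, where $u_{j,i}=d_j/(n-1-i)-\lambda$. Under the hypotheses, $u_{j,i}=(\delta_j-\lambda x_j+\lambda i)/(n-1)+\OO(n^{-1})$ and $u_{j,i}/(1-\lambda)=\OO(n^{-1/2})$, so a Taylor expansion of $\log(1-u_{j,i}/(1-\lambda))$ to a fixed finite order contributes error $O(n^{-b})$. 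Summing the expansion over $i$ by means of $\sum_{i<x_j}i=x_j(x_j-1)/2$, $\sum_{i<x_j}i^2=x_j(x_j-1)(2x_j-1)/6$, etc., and then over $j$, organises the result into the combinations $X$, $X_2$, $X_3$, $C_{1,1}$, $C_{1,2}$, $C_{2,1}$, with exactly the coefficients in $\lambda$ and $1-\lambda$ displayed in the theorem.

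For $\hit$, I rely on the combinatorial identity $\hit(\dvec,\XX)=\miss(\bar\dvec,\XX)$, where $\bar\dvec=(n-1)\mathbf{1}-\dvec$ is the complementary degree sequence. This holds because complementation is a bijection between simple graphs of degree $\dvec$ and those of degree $\bar\dvec$ sending ``contains $\XX$ as a subgraph'' to ``edge-disjoint from $\XX$'', yielding $\lambda^X\hit(\dvec,\XX)=(1-\bar\lambda)^X\miss(\bar\dvec,\XX)=\lambda^X\miss(\bar\dvec,\XX)$. The hypotheses of Theorem~\ref{bigtheorem} are preserved under $\dvec\leftrightarrow\bar\dvec$ because $\min\{d,n-d-1\}$ and all the $|d_j-d|$ and $x_j$ bounds are unchanged, so the $\miss$ formula applies. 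The translation of barred quantities is elementary: $\bar\lambda=1-\lambda$ and $\bar\delta_j=x_j-\delta_j$, hence $\bar D=L$, $\bar C_{1,1}=X_2-C_{1,1}$, $\bar C_{1,2}=X_3-C_{1,2}$, $\bar C_{2,1}=X_3-2C_{1,2}+C_{2,1}$, while $X$ and the $X_\ell$ are unchanged. Substituting and collecting like terms reproduces the stated $\hit$ formula; for example, the contributions to $X_2$ from the barred $X_2$ term and from $\bar C_{1,1}$ combine to give the stated coefficient $-(1+\lambda)/(2\lambda n)$, and similar recombinations produce the $(1+\lambda)(1+2\lambda)/(6\lambda^2)$ coefficient on $X_3$.

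The principal obstacle is the bookkeeping in the binomial expansion for $\miss$: one must expand $\log(1-u_{j,i}/(1-\lambda))$ to sufficient order both in $u_{j,i}$ and in the implicit expansion of $1/(n-1-i)$ around $1/(n-1)$, keep track of which power of $(1-\lambda)^{-1}$ multiplies each contribution, and verify that the truncated terms are uniformly $O(n^{-b})$ using $|\delta_j|,x_j=\OO(n^{1/2})$, $X=\OO(n)$, and $(1-\lambda)^{-1}=O(\log n)$. Once this is completed for $\miss$, the formula for $\hit$ follows from the complementation identity with no further analytic input.
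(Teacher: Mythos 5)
Your proposal is correct and follows essentially the same route as the paper: the paper likewise obtains $\miss$ as the ratio $G(\dvec,\XX)/\bigl((1-\lambda)^X G(\dvec)\bigr)$ via two applications of Theorem~\ref{bigtheorem}, and explicitly offers the complementation argument (a graph avoids $\XX$ iff its complement contains $\XX$) as the way to get $\hit$, which is exactly your $\hit(\dvec,\XX)=\miss(\bar\dvec,\XX)$ identity. Your filled-in details — the cancellation of $\sqrt2\,e^{1/4}$ and $R^2/(16A^2n^4)$, the expansion of the binomial ratio, and the translations $\bar\lambda=1-\lambda$, $\bar\delta_j=x_j-\delta_j$, $\bar D=L$, $\bar C_{1,1}=X_2-C_{1,1}$, etc. — all check out against the stated coefficients.
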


\begin{proof}
Since $(1-\lambda)^X\miss(\dvec,\XX)=G(\dvec,\XX)/G(\dvec)$,
the first part can
be obtained from Theorem~\ref{bigtheorem}. The second part can be
found in similar fashion, or by noting that the probability of a
random graph avoiding $\XX$ is the probability of the
complement of the graph having $\XX$ as a subgraph.
\end{proof}

If $\XX$ is not too dense,
the probabilities in Theorem~\ref{subprobs}
asymptotically match those for an ordinary random graph with edge
probability~$\lambda$.  Sufficient conditions are that
$\miss(\dvec,\XX)=1+o(1)$ if
\[
     \lambda X_2 + X \max\nolimits_j\,\abs{d_j-d} = o\((1-\lambda)n\),
\]
and $\hit(\dvec,\XX)=1+o(1)$ if
\[
     (1-\lambda) X_2 + X \max\nolimits_j\,\abs{d_j-d} = o\(\lambda n\).
\]
Both these sufficient conditions hold, for example, if
$X=O(n^{1/2-2\eps})$, or if $d_j{-}d$ and $x_j$ are uniformly
$O(n^\eps)$ for $1\le j\le n$ and $X=O(n^{1-2\eps})$.

Since Theorem~\ref{subprobs} is rather complex, we give some special
cases to facilitate its application.  We also give the value of
$\num(\dvec,\XX)$, which is the exponential factor
in Theorem~\ref{bigtheorem}.

\begin{cor}\label{flat}
Suppose the conditions of Theorem~\ref{bigtheorem} hold, and in
addition assume that $d_1=\cdots=d_n=d$.  Then
\begin{align*}
 \num(\dvec,\XX) &= \exp\biggl(\frac{1}{4}
    + \frac{\lambda(X^2{-}H)}{(1{-}\lambda)n^2}
    + O(n^{-b}) \biggr),\\
 \miss(\dvec,\XX) &= \exp\biggl(
 \frac{\lambda X}{(1{-}\lambda)n}
  - \frac{\lambda X_2}{2(1{-}\lambda)n}
  - \frac{\lambda(2{-}\lambda)X_3}{6(1{-}\lambda)^2n^2} \\
  &{\kern 33mm}
  + \frac{\lambda X^2}{(1{-}\lambda)n^2}
  - \frac{\lambda H}{(1{-}\lambda)n^2}
  + O(n^{-b}) \biggr), \displaybreak[0] \\
 \hit(\dvec,\XX) &= \exp\biggl(
   \frac{(1{-}\lambda)X}{\lambda n}
   - \frac{(1{-}\lambda)X_2}{2\lambda n}
   - \frac{(1{-}\lambda^2)X_3}{6\lambda^2n^2}\\
   &{\kern 33mm}
   + \frac{(1{-}\lambda)X^2}{\lambda n^2}
   - \frac{(1{-}\lambda)H}{\lambda n^2}
   + O(n^{-b}) \biggr).
\end{align*}
\end{cor}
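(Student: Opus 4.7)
The plan is to substitute $d_1=\cdots=d_n=d$ into Theorem~\ref{subprobs} and into the $\num$ factor of Theorem~\ref{bigtheorem}, noting that the derived parameters collapse substantially. Since $d_j{-}d=0$, the shifted degrees become $\delta_j=\lambda x_j$; hence $R=0$, $K=0$, and for all $\ell,m\ge 1$,
\[
   C_{\ell,m} = \sum_{j=1}^n \delta_j^\ell x_j^m = \lambda^\ell X_{\ell+m},
   \qquad
   D = \sum_{jk\in\XX}\delta_j\delta_k = \lambda^2 H,
   \qquad
   L = (1-\lambda)^2 H.
\]
In particular, $C_{1,1}=\lambda X_2$, $C_{1,2}=\lambda X_3$ and $C_{2,1}=\lambda^2 X_3$.

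For $\num(\dvec,\XX)$, substituting $R=0$ and $D=\lambda^2 H$ into the exponent of Theorem~\ref{bigtheorem} and using $A=\tfrac12\lambda(1-\lambda)$ turns $-D/(2An^2)$ into $-\lambda H/((1-\lambda)n^2)$ and kills the $R^2$ term, yielding the stated expression. For $\miss(\dvec,\XX)$, I plug the identities above into the first display of Theorem~\ref{subprobs}. The two $X_2$ contributions combine to $-\lambda X_2/(2(1-\lambda)n)$; the three contributions proportional to $X_3$ (the explicit one together with those coming from $C_{1,2}$ and $C_{2,1}$) combine, after the arithmetic
\[
   \frac{1-2\lambda}{6} - \frac{1-2\lambda}{2} - \frac{\lambda}{2} = -\frac{2-\lambda}{6},
\]
to $-\lambda(2-\lambda)X_3/(6(1-\lambda)^2 n^2)$; the $-D$ term again yields $-\lambda H/((1-\lambda)n^2)$; and the $\lambda X$ and $\lambda X^2$ terms carry over unchanged.

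The formula for $\hit(\dvec,\XX)$ is obtained in exactly parallel fashion from the second display of Theorem~\ref{subprobs}. The identity $L=(1-\lambda)^2 H$ supplies $-(1-\lambda)H/(\lambda n^2)$; the same $C_{\ell,m}$ substitutions collapse the $X_2$ coefficient to $-(1-\lambda)/(2\lambda)$; and, after expanding $(1{+}\lambda)(1{+}2\lambda)=1+3\lambda+2\lambda^2$, the three $X_3$ contributions reduce to $-(1-\lambda^2)X_3/(6\lambda^2 n^2)$. There is no serious obstacle: the entire argument is mechanical substitution and simplification, and the only care needed is to track the factors of $\lambda$ and $1-\lambda$ correctly.
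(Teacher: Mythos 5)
Your proposal is correct and is exactly the intended argument: the paper offers no separate proof of Corollary~\ref{flat}, treating it as the direct specialization of Theorem~\ref{subprobs} (and of the exponential factor in Theorem~\ref{bigtheorem}) under $d_j\equiv d$, which is what you carry out. Your substitutions $\delta_j=\lambda x_j$, $R=0$, $D=\lambda^2H$, $L=(1-\lambda)^2H$, $C_{\ell,m}=\lambda^\ell X_{\ell+m}$ and the subsequent coefficient arithmetic all check out against the stated formulas.
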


\begin{cor}\label{reg}
Suppose the conditions of Theorem~\ref{bigtheorem} hold, and in
addition assume that $x_1=\cdots=x_n=x$ (which implies
that $x=O(n^{2\eps})$).  Then
\begin{align*}
 \num(\dvec,\XX) &= \exp\biggl(\frac{1}{4}
  + \frac{\lambda x^2}{4(1{-}\lambda)}
  - \frac{K}{2An^2}
  - \frac{R^2}{16A^2n^4}
  + O(n^{-b}) \biggr), \\
 \miss(\dvec,\XX) &= \exp\biggl(
  - \frac{\lambda x(x{-}2)}{4(1{-}\lambda)}
  - \frac{xR}{2(1{-}\lambda)^2n^2}
  - \frac{K}{2An^2}
  + O(n^{-b}) \biggr), \displaybreak[0]\\
 \hit(\dvec,\XX) &= \exp \biggl(
  - \frac{(1{-}\lambda)x(x{-}2)}{4\lambda}
  - \frac{xR}{2\lambda^2n^2}
  - \frac{K}{2An^2}
  + O(n^{-b}) \biggr).
\end{align*}
\end{cor}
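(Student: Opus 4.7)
The plan is to derive Corollary~\ref{reg} from Theorem~\ref{subprobs} (and, for $\num$, from the exponent in Theorem~\ref{bigtheorem}) by direct substitution of $x_1=\cdots=x_n=x$, followed by absorbing all resulting terms of order $x^3/n$ into $O(n^{-b})$, and finally collecting the surviving pieces.

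First I would specialize the bookkeeping sums. With $x_j=x$ one immediately has $X=nx/2$, $X_\ell=nx^\ell$, and $H=x^2X=nx^3/2$. Writing $C_{\ell,m}=x^m R_\ell$ and using $R_1=\sum_j(d_j-d)+\lambda\sum_j x_j=n\lambda x$ gives $C_{1,1}=n\lambda x^2$ and $C_{1,2}=n\lambda x^3$, while $R_2=R+n\lambda^2 x^2$ (the cross term vanishes since $\sum_j(d_j-d)=0$) gives $C_{2,1}=xR+n\lambda^2 x^3$. For $D$ and $L$ I would expand $\delta_j=(d_j-d)+\lambda x$ and $\delta_j-x_j=(d_j-d)-(1-\lambda)x$; the off-diagonal cross term $\sum_{jk\in\XX}[(d_j-d)+(d_k-d)]=x\sum_j(d_j-d)=0$ disappears, leaving $D=K+\lambda^2 nx^3/2$ and $L=K+(1-\lambda)^2 nx^3/2$.

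Next I would discard negligible contributions. Since $x=O(n^{2\eps})$ and $\lambda^{-1},(1-\lambda)^{-1}=\OO(1)$, any term of the shape $\lambda^{c_1}(1-\lambda)^{c_2}x^3/n$ is $\OO(n^{-1})=O(n^{-b})$ once $\eps$ is sufficiently small (recall $b<\tfrac12$). This absorbs the $X_3$-term, the $C_{1,2}$-term, and the $nx^3/2$ pieces inside $D$, $L$, $C_{2,1}$ and $H$, so what remains is linear and quadratic in $x$ together with cleaned-up $K$, $R$ and $R^2$ contributions.

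Finally I would collect coefficients. For $\num(\dvec,\XX)$, only $\frac{\lambda X^2}{(1-\lambda)n^2}=\frac{\lambda x^2}{4(1-\lambda)}$ and $-\frac{D}{2An^2}=-\frac{K}{2An^2}$ survive beyond the $\tfrac14$ and $-R^2/(16A^2n^4)$ already present in Theorem~\ref{bigtheorem}. For $\miss$, the surviving $\lambda x^2/(1-\lambda)$ contributions in Theorem~\ref{subprobs} (from $X_2$, $X^2$ and $-C_{1,1}$) combine with coefficient $\tfrac12+\tfrac14-1=-\tfrac14$; adding the $\frac{\lambda x}{2(1-\lambda)}$ term from $X$ produces exactly $-\frac{\lambda x(x-2)}{4(1-\lambda)}$, while $-K/(2An^2)$ and $-xR/(2(1-\lambda)^2n^2)$ fall out cleanly from $D$ and $C_{2,1}$. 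The $\hit$ calculation is completely parallel: the analogous $x^2/\lambda$ contributions sum to $-\frac{1-\lambda}{4\lambda}$, producing $-\frac{(1-\lambda)x(x-2)}{4\lambda}$. The main ``obstacle'' is bookkeeping rather than mathematics; the only quasi-analytic point is to confirm that every residual $\lambda^c(1-\lambda)^{-c'}x^3/n$ term really does land in $O(n^{-b})$ under the paper's $\OO$-convention, which is immediate from $\lambda^{\pm1},(1-\lambda)^{\pm1}=\OO(1)$.
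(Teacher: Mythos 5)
Your proposal is correct and is exactly the intended derivation: the paper offers no separate proof of Corollary~\ref{reg}, treating it as a routine specialization of Theorem~\ref{bigtheorem} and Theorem~\ref{subprobs} under $x_1=\cdots=x_n=x$, which is precisely what you carry out (and your identities $D=K+n\lambda^2x^3/2$, $L=K+n(1-\lambda)^2x^3/2$, $C_{2,1}=xR+n\lambda^2x^3$ and the coefficient collections $\tfrac12+\tfrac14-1=-\tfrac14$ and $-\tfrac{1+\lambda}{2\lambda}+\tfrac{1-\lambda}{4\lambda}+1=-\tfrac{1-\lambda}{4\lambda}$ all check out). The absorption of the $x^3/n$ terms into $O(n^{-b})$ is justified exactly as you say, via the paper's convention $\lambda^{-1},(1-\lambda)^{-1}=O(\log n)=\OO(1)$ and $b<\tfrac12$.
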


The two parts of Theorem~\ref{subprobs} have a common generalization.
Let $\YY$ be a supergraph of~$\XX$.
Then the probability that a random graph with degrees $\dvec$ has
intersection with~$\YY$ equal to $\XX$ is
\[
   \frac{G(\dvec-\xvec,\YY)}{G(\dvec)}.
\]
If the degrees of $\YY$ are $y_1,\ldots,y_n$, with
$y_j=O(n^{1/2+\eps})$ uniformly over~$j$, and $\sum_{j=1}^n
y_j=O(n^{1+2\eps})$, then this probability can be computed using
two applications of Theorem~\ref{bigtheorem}. The resulting general
formula is rather complex, so we will be content with presenting the
special case where~$\YY$ consists of a single clique and otherwise
isolated vertices. This is the important case of an induced
subgraph.

Suppose that for some $m$, we have $x_{m+1}=\cdots=x_n=0$.
Let $\XX^{[m]}$ be the subgraph of $\XX$ induced by vertices
$1,\ldots,m$ (so $\XX^{[m]}$ has the same edges
as~$\XX$).  For $k,\ell\ge 0$, define the quantity
\[
     \omega_{k,\ell}
       = \sum_{j=1}^m (d_j-d)^k(x_j-\lambda(m-1))^\ell.
\]

\begin{thm}\label{induced}
Assume the conditions of Theorem~\ref{bigtheorem} and in addition
that $m=O(n^{1/2+\eps})$ and $x_{m+1}=\cdots=x_n=0$.
Then the probability that a random graph with degree
sequence~$\dvec$ has $\XX^{[m]}$ as an induced subgraph is
\begin{align*}
  \lambda^X & (1-\lambda)^{\binom{m}{2}-X} \\
  &{}\times\exp\biggl(
      \frac{2\omega_{1,1}-\omega_{0,2}}{4An}
       + \frac{m^2}{2n} + \frac{(1{-}2\lambda)\omega_{0,1}}{4An}
       + \frac{4\omega_{1,0}\omega_{0,1}-\omega_{0,1}^2
                        -2\omega_{1,0}^2}{8An^2}\\
     &{\kern 15mm} 
      + \frac{(2\omega_{1,1}-\omega_{2,0}-\omega_{0,2})m}{4An^2}
      - \frac{(1{-}2\lambda)(\omega_{0,3}+3\omega_{2,1}-3\omega_{1,2})}
      	     {24A^2n^2}
	  + O(n^{-b})\biggr).
\end{align*}
\end{thm}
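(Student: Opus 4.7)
The approach is to apply Theorem~\ref{bigtheorem} twice and take a ratio. Let $\YY$ be the graph on $n$ vertices consisting of a clique on $\{1,\ldots,m\}$ and $n-m$ isolated vertices. The hypothesis $x_{m+1}=\cdots=x_n=0$ forces $\XX\subseteq\YY$, and a random graph $G$ with degree sequence $\dvec$ has $\XX^{[m]}$ as an induced subgraph precisely when $G\cap\YY=\XX$. By the identity stated immediately before the theorem, this probability equals $G(\dvec-\xvec,\YY)/G(\dvec)$.

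First I would check that the hypotheses of Theorem~\ref{bigtheorem} transfer to the pair $(\dvec-\xvec,\YY)$: the shifted degrees lie within $O(n^{1/2+\eps})$ of their mean $d'=d-2X/n$, the $\YY$-degrees satisfy $y_j\le m-1=O(n^{1/2+\eps})$, and the total $\YY$-edge count $\binom{m}{2}=O(n^{1+2\eps})$. All assumptions hold after possibly decreasing $\eps$.

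The bulk of the work is then an expansion of the ratio. The na\"\i ve part $\widehat G(\dvec-\xvec,\YY)/\widehat G(\dvec)$ produces the prefactor $\lambda^X(1-\lambda)^{\binom{m}{2}-X}$ together with exponential corrections. The prefactor emerges from combining (i) the product of binomial ratios $\binom{n-m}{d_j-x_j}/\binom{n-1}{d_j}$ for $j\le m$, which at leading order behave like $\lambda^{x_j}(1-\lambda)^{m-1-x_j}$, (ii) the perturbed factor $((\lambda')^{\lambda'}(1-\lambda')^{1-\lambda'})^{\binom{n}{2}}$ with $\lambda'=\lambda-2X/((n-1)n)+O(\cdot)$, and (iii) the $(1-\lambda')^{-\binom{m}{2}}$ factor from~\eqref{naive}. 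The subleading exponentials come from third-order Stirling expansions of these binomial ratios and from Taylor-expanding the $\lambda'$-factor around $\lambda$; for $j\le m$ they reorganize naturally into the moment sums $\omega_{k,\ell}$ together with global polynomial contributions in $m$ and $X$. The correction exponential of Theorem~\ref{bigtheorem} additionally contributes the difference between $(R,A,X,D)$ and the corresponding $(R',A',\binom{m}{2},D')$ evaluated for $(\dvec-\xvec,\YY)$; here the clique structure of $\YY$ gives the clean identity
\[
    \sum_{jk\in\YY}a_ja_k
      = \tfrac12\biggl(\Bigl(\sum_{j=1}^m a_j\Bigr)^{\!2}
        - \sum_{j=1}^m a_j^2\biggr),
\]
so $D'$ expands directly into $\omega_{2,0},\omega_{1,1},\omega_{0,2}$ after substituting $\delta_j'=(d_j-d)-(x_j-\lambda(m-1))+O(\cdot)$.

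The main obstacle is the bookkeeping. Theorem~\ref{induced} retains exponent terms down to order $n^{-2}$, so each expansion must be carried to one order beyond its crude leading term; in particular the cubic sums $\omega_{0,3},\omega_{2,1},\omega_{1,2}$ appear only at third order in the Stirling expansion of the binomial ratios. Once all contributions are collected and the overlaps between the na\"\i ve and correction parts are reconciled, absorbing the remainders into $O(n^{-b})$ is routine using the hypotheses $d_j-d,\,x_j,\,m=O(n^{1/2+\eps})$ and the $\OO$-notation identity $\lambda^{c_1}(1-\lambda)^{c_2}n^{c_3+c_4\eps}=\OO(n^{c_3})$.
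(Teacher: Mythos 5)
Your proposal follows exactly the route the paper takes: the paper itself only remarks that the probability equals $G(\dvec-\xvec,\YY)/G(\dvec)$ with $\YY$ the clique on $\{1,\ldots,m\}$ plus isolated vertices, and that the formula follows from two applications of Theorem~\ref{bigtheorem}; it gives no further detail. Your sketch of the ratio expansion, the emergence of the prefactor $\lambda^X(1-\lambda)^{\binom{m}{2}-X}$, and the clique identity converting $D'$ into the $\omega_{k,\ell}$ sums is correct and in fact more explicit than what the paper records.
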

\noindent Note that, within the stated error term, the probability is
independent of $d_{m+1},\ldots,d_n$ except inasmuch as they 
contribute to $d$ and~$\lambda$.

The factor $\lambda^X(1-\lambda)^{\binom{m}{2}-X}$ in Theorem~\ref{induced}
is the probability for an ordinary Erd\H os-R\'enyi
random graph with edge probability~$\lambda$.
A sufficient condition for the argument of the exponential to be $o(1)$ is
$m^2(m+\max_{j=1}^m\abs{d_j-d})=o(An)$.
Relaxing this condition by a factor of $m$ allows us to see the leading terms of
the deviation of behaviour from an ordinary random graph. 

\begin{cor}\label{inducedcor}
Assume the conditions of Theorem~\ref{bigtheorem} and also
that $m(m+\max_{j=1}^m\abs{d_j-d})=o(An)$ and $x_{m+1}=\cdots=x_n=0$.
Then the probability that a random graph with degree
sequence~$\dvec$ has $\XX^{[m]}$ as an induced subgraph is
\[
 \lambda^X (1-\lambda)^{\binom{m}{2}-X}
  \exp\biggl( \frac{\omega_{1,1}}{2An} - \frac{\omega_{0,2}}{4An} + o(1)\biggr).
\]
\end{cor}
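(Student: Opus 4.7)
The plan is to obtain the corollary as a direct specialization of Theorem~\ref{induced}: under the strengthened hypothesis $m(m+M)=o(An)$, where $M:=\max_{j=1}^m|d_j-d|$, every term inside the exponential of Theorem~\ref{induced} collapses to $o(1)$ except $(2\omega_{1,1}-\omega_{0,2})/(4An)$, which is exactly the retained $\omega_{1,1}/(2An)-\omega_{0,2}/(4An)$ in the corollary.

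First I would record crude size bounds on $\omega_{k,\ell}$. Since $x_{m+1}=\cdots=x_n=0$ forces $x_j\le m-1$ for $1\le j\le m$, the centered values satisfy $|x_j-\lambda(m-1)|=O(m)$, giving uniformly $|\omega_{k,\ell}|=O(m^{1+\ell}M^k)$. The hypothesis then supplies two master inequalities, $m^2=o(An)$ and $mM=o(An)$, which together imply $m^iM^j=o((An)^2)$ for each pair $(i,j)\in\{(4,0),(3,1),(2,2)\}$; these are precisely the pairs that arise below.

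Next I would dispose of the six discarded terms in order. The linear terms $m^2/(2n)$ and $(1-2\lambda)\omega_{0,1}/(4An)$ are both $O(m^2/(An))=o(1)$, using $A\le\tfrac18$. The numerators of $(4\omega_{1,0}\omega_{0,1}-\omega_{0,1}^2-2\omega_{1,0}^2)/(8An^2)$ and $(2\omega_{1,1}-\omega_{2,0}-\omega_{0,2})m/(4An^2)$ are each a sum of monomials $m^iM^j$ with $i+j=4$, so dividing by $An^2$ leaves $o(A)=o(1)$ term by term. The same applies to the $A^{-2}$-weighted term $(1-2\lambda)(\omega_{0,3}+3\omega_{2,1}-3\omega_{1,2})/(24A^2n^2)$: its numerator is a degree-$4$ form in $(m,M)$, hence $o((An)^2)$, so division by $A^2n^2$ still gives $o(1)$. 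The residual $O(n^{-b})$ is absorbed, and multiplying the surviving $\exp\bigl((2\omega_{1,1}-\omega_{0,2})/(4An)+o(1)\bigr)$ by $\lambda^X(1-\lambda)^{\binom{m}{2}-X}$ yields the statement.

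The main obstacle I anticipate is only the last term above: because $A$ can be as small as $\Theta(1/\log^2 n)$ under the hypotheses of Theorem~\ref{bigtheorem}, a single factor of $m(m+M)=o(An)$ would be insufficient to tame the $A^{-2}$ weighting, and one must verify that every monomial in the numerator admits \emph{two} independent applications of the master inequality. Once that check is in hand, the rest of the argument is purely mechanical bookkeeping.
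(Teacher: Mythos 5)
Your proposal is correct and is exactly the route the paper intends: Corollary~\ref{inducedcor} is stated as an immediate specialization of Theorem~\ref{induced}, with the hypothesis $m(m+\max_j\abs{d_j-d})=o(An)$ used, just as you do, to factor every discarded numerator monomial ($m^4$, $m^3M$, $m^2M^2$) into two applications of the master bound so that even the $A^{-2}n^{-2}$-weighted term is $o(1)$, leaving only $(2\omega_{1,1}-\omega_{0,2})/(4An)$. The only nitpick is your side remark that $A$ can be as small as $\Theta(1/\log^2 n)$: since $\lambda+(1-\lambda)=1$, the theorem's hypotheses give $A=\Omega(1/\log n)$, which only makes your estimates easier and does not affect the argument.
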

The term $\omega_{0,2}/(4An)$ was obtained in~\cite{KSW09} in the case that
$\dvec=((n-1)/2,\ldots,(n-1)/2)$, $m=o(\sqrt n)$, provided
$x_1,\ldots,x_m$ don't differ too
much from $\lambda m$. (In the regular case $\omega_{1,1}=0$.)

It has been shown by Barvinok and Hartigan~\cite{BH}, see also~\cite{CPA}, that
an independent-edge model more accurately matching the $\dvec$ model has
each edge $jk$ chosen with probability~$\lambda_{jk}$, where these constants
were introduced in~\cite{MWreg} and will appear generalised in the following section.
We will call this the \textit{$\{\lambda_{jk}\}$-model}.
Under our strict constraints on $\dvec$, we have
\begin{equation}\label{lamnull}
  \lambda_{jk} = \lambda + \frac{d_j-d}{n} + \frac{d_k-d}{n}
      + \frac{(1-2\lambda)(d_j-d)(d_k-d)}{2An^2} + \OO(n^{-3/2}).
\end{equation}
We can restate Theorem~\ref{induced} with that model in mind.

\begin{cor}\label{induced2}
Assume the conditions of Theorem~\ref{bigtheorem} and in addition
that $m=O(n^{1/2+\eps})$ and $x_{m+1}=\cdots=x_n=0$.
Then the probability that a random graph with degree
sequence~$\dvec$ has $\XX^{[m]}$ as an induced subgraph is
\begin{align*}
  \prod_{jk\in\XX} \lambda_{jk} &
  \prod_{jk\notin\XX} (1-\lambda_{jk})\\
  &{}\times\exp\biggl(
      - \frac{\omega_{0,2}}{4An}
       + \frac{m^2}{2n} + \frac{(1{-}2\lambda)\omega_{0,1}}{4An}
       + \frac{4\omega_{1,0}\omega_{0,1}-\omega_{0,1}^2
                        }{8An^2}\\
     &{\kern 15mm} 
      + \frac{(2\omega_{1,1}-\omega_{0,2})m}{4An^2}
      - \frac{(1{-}2\lambda)(\omega_{0,3}-3\omega_{1,2})}
      	     {24A^2n^2}
	  + O(n^{-b})\biggr),
\end{align*}
where the two products are restricted to $1\le j<k\le m$.
\end{cor}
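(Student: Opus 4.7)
The strategy is to deduce Corollary~\ref{induced2} from Theorem~\ref{induced} by re-expressing the prefactor $\lambda^X(1-\lambda)^{\binom{m}{2}-X}$ as $\prod_{jk\in\XX}\lambda_{jk}\prod_{jk\notin\XX,\,j<k\le m}(1-\lambda_{jk})$ multiplied by an explicit exponential correction, and then absorbing that correction into the exponent already supplied by Theorem~\ref{induced}.

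Set $\gamma_{jk}=\lambda_{jk}-\lambda$. Under the hypotheses of Theorem~\ref{bigtheorem}, \eqref{lamnull} gives $\gamma_{jk}=\OO(n^{-1/2})$ together with an explicit formula through order $\OO(n^{-3/2})$. Taking logarithms of the two products reduces the task to evaluating
\[
 S \;=\; \sum_{jk\in\XX}\log\!\bigl(1+\tfrac{\gamma_{jk}}{\lambda}\bigr)
   \;+\; \sum_{\substack{jk\notin\XX\\ 1\le j<k\le m}}\log\!\bigl(1-\tfrac{\gamma_{jk}}{1{-}\lambda}\bigr)
\]
and checking that it matches, up to $O(n^{-b})$, the difference between the exponents of Theorem~\ref{induced} and Corollary~\ref{induced2}. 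A direct subtraction shows that this target difference equals
\[
 \frac{\omega_{1,1}}{2An} - \frac{\omega_{1,0}^2}{4An^2}
   - \frac{\omega_{2,0}\,m}{4An^2} - \frac{(1{-}2\lambda)\omega_{2,1}}{8A^2n^2}.
\]

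The computation of $S$ proceeds by Taylor-expanding each logarithm to third order in $\gamma_{jk}$. Because $\gamma_{jk}=\OO(n^{-1/2})$ and $\binom{m}{2}=\OO(n)$ pairs are summed, the collective fourth-order contribution is $\OO(n^{-1})$ and the $\OO(n^{-3/2})$ residual in \eqref{lamnull} contributes $\OO(n^{-1/2})$ to $S$; both are absorbed by $O(n^{-b})$ once $\eps$ is chosen small enough in terms of $\tfrac12-b$, while each factor $\lambda^{-1}$ or $(1-\lambda)^{-1}$ contributes only a $\log n$, which is absorbed by the $\OO$ convention. Substituting \eqref{lamnull} into the truncated expansion yields symmetric sums $\sum_{j<k\le m}F(d_j{-}d,d_k{-}d)$, split according to whether $jk\in\XX$. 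These collapse into polynomials in $m$, $X$, $\lambda$, $A$, and the $\omega_{k,\ell}$ by means of the identities $2X=\sum_{j=1}^m x_j$ (valid because $x_{m+1}=\cdots=x_n=0$) and $\sum_{j<k\le m}(a_j+a_k)=(m-1)\sum_{j=1}^m a_j$, together with standard symmetric-function manipulations. Coefficient-by-coefficient comparison with the displayed target then closes the proof.

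The main obstacle is purely algebraic: one must decompose each sum into its $\XX$ and non-$\XX$ parts, track the sign asymmetry between the $\lambda^{-1}$ and $(1-\lambda)^{-1}$ expansions, and identify precisely which cross-products of the first- and second-order pieces of $\gamma_{jk}$ survive at the required precision. As a sanity check, when $\XX=\emptyset$ one has $\omega_{1,1}=-\lambda(m{-}1)\omega_{1,0}$, and the leading first-order part of $S$, namely $-\sum_{j<k\le m}\gamma_{jk}/(1-\lambda)$, reduces to $-(m{-}1)\omega_{1,0}/((1-\lambda)n)$, which matches $\omega_{1,1}/(2An)$ via the identity $\lambda/(2A)=1/(1-\lambda)$.
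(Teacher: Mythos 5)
Your proposal is correct and takes essentially the route the paper intends: Corollary~\ref{induced2} is exactly Theorem~\ref{induced} with the prefactor $\lambda^X(1-\lambda)^{\binom m2-X}$ rewritten via \eqref{lamnull} and the correction absorbed into the exponent, and your target difference $\tfrac{\omega_{1,1}}{2An}-\tfrac{\omega_{1,0}^2}{4An^2}-\tfrac{\omega_{2,0}m}{4An^2}-\tfrac{(1-2\lambda)\omega_{2,1}}{8A^2n^2}$ is the right one. The expansion you sketch does close: second order in $\lambda_{jk}-\lambda$ already suffices (third and fourth order are $\OO(n^{-1/2})$ in total), the terms involving $\sum_{jk\in\XX}(d_j-d)(d_k-d)$ cancel, and the leftover mismatches are of size $\OO(n^{-1/2})$, hence within $O(n^{-b})$.
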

Note that the $\omega_{1,1}$ term of Corollary~\ref{inducedcor}
has disappeared, but the $\omega_{0,2}$ term remains. The
exponential factor quantifies how much the $\{\lambda_{jk}\}$-model
is in error. However, when $\XX$ is generated according to the
$\{\lambda_{jk}\}$-model, the expectation of the argument of the
exponential is $O(n^{-b})$ and the variance is $\OO(n^{-1/2})$,
so there is some sense in which we can say that the
$\{\lambda_{jk}\}$-model gives a very accurate estimate
for \textit{typical\/} subgraphs. The details of this remain to be
worked out.

Let $\YY$ be any graph on $n$ vertices and let $Y$ be its number of edges.
Define the random variable $E_{\dvec,\YY}$ to be the number of edges that
a random graph with degree sequence $\dvec$ has in common with~$\YY$.
Barvinok and Hartigan~\cite{BH} proved that $E_{\dvec,\YY}$ is concentrated
close to $\sum_{jk\in\YY} \lambda_{jk}$ when $Y=\Omega(n^2)$.  In fact their
estimate is explicit enough to infer this concentration with weaker bounds
for $Y=\omega(n^{3/2}\log n)$.
Under our strict conditions on $\dvec$, we can obtain such a result for all~$Y$.
We have not determined the best result that follows from Theorem~\ref{bigtheorem},
but will for this paper be content with the following weak corollary of
Theorem~\ref{induced}.

\begin{cor}\label{distrib}
Assume the conditions of Theorem~\ref{bigtheorem}.  If\/ $Y=O(n^{1/5})$, then
\begin{equation}\label{Zbinom}
 \Prob(E_{\dvec,\YY})=k) 
   = \binom{Y}{k}\lambda^k(1-\lambda)^{\binom{Y}{2}-k}
      \(1 + \OO(n^{-1/5})\)
\end{equation}
uniformly over~$k=0,\ldots,Y$.
If\/ $Y=\Omega(n^{1/5})$, then
\[
    E_{\dvec,\YY} = \lambda Y(1+O(n^{-1/10+\delta}))
\] with probability $1 - O(\exp(n^{-\delta}))$ for any $\delta\in(0,\tfrac{1}{10})$.
\end{cor}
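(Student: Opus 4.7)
The plan is to handle the two regimes separately, both building on material earlier in the paper.

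For the first regime ($Y=O(n^{1/5})$), relabel so that the non-isolated vertices of $\YY$ are $\{1,\ldots,m\}$ with $m\le 2Y=O(n^{1/5})$. Corollary~\ref{induced2}, applied with the clique $K_m$ as ambient vertex set, expresses the probability of inducing any $\UU\subseteq K_m$ as a $\{\lambda_{jk}\}$-model term multiplied by an exponential correction. Decomposing $\UU=\UU_1\sqcup\UU_2$ with $\UU_1\subseteq\YY$ and $\UU_2\subseteq K_m\setminus\YY$,
\[
 \Prob(E_{\dvec,\YY}=k)=\sum_{\UU_1\subseteq\YY,\,|\UU_1|=k}\;\sum_{\UU_2\subseteq K_m\setminus\YY}\Prob(\text{induce }\UU_1\cup\UU_2).
\]
The sum over $\UU_2$ telescopes, since the $\{\lambda_{jk}\}$-probabilities of all subgraphs of $K_m\setminus\YY$ sum to~$1$; modulo the correction factor what remains is
\[
 \sum_{\UU_1\subseteq\YY,\,|\UU_1|=k}\;\prod_{jk\in\UU_1}\lambda_{jk}\prod_{jk\in\YY\setminus\UU_1}(1-\lambda_{jk}).
\]
By~\eqref{lamnull} and $Y=O(n^{1/5})$, replacing each $\lambda_{jk}$ here by $\lambda$ incurs relative error $\OO(Yn^{-1/2})=\OO(n^{-3/10})$, recovering $\binom{Y}{k}\lambda^k(1-\lambda)^{Y-k}$. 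Meanwhile the correction factor itself is $1+\OO(n^{-2/5})$ uniformly in $\UU$: since $|x_j-\lambda(m-1)|=O(m)$ for every $\UU\subseteq K_m$ and $m=O(n^{1/5})$, the dominant term $\omega_{0,2}/(4An)$ of Corollary~\ref{induced2} is $\OO(n^{-2/5})$ and every other term is of smaller order. The two relative errors combine to the claimed $\OO(n^{-1/5})$.

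For the second regime ($Y=\Omega(n^{1/5})$), the preceding argument fails since $m$ may be comparable to~$n$. I would instead estimate central moments. Expanding
\[
 \expect(E_{\dvec,\YY}-\lambda Y)^{2r}=\expect\biggl(\sum_{e\in\YY}(\mathbf{1}_e-\lambda)\biggr)^{2r}
\]
and grouping $2r$-tuples of edges by the subgraph $\XX$ of their distinct edges, the $\hit$-formula of Theorem~\ref{subprobs} gives $\Prob(\XX\subseteq G)=\lambda^{|\XX|}(1+o(1))$ whenever $|\XX|\le 2r=o(\sqrt n)$, matching the $\{\lambda_{jk}\}$-model value. Standard accounting (only $2r$-tuples in which every distinct edge appears at least twice contribute after centering) then yields the sub-Gaussian moment bound
\[
 \expect(E_{\dvec,\YY}-\lambda Y)^{2r}\le(2r-1)!!\,\bigl(Y\lambda(1-\lambda)\bigr)^{r}(1+o(1))
\]
for $r$ up to a small positive power of~$n$. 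Markov's inequality with $t=\lambda Yn^{-1/10+\delta}$ and the optimal choice of~$r$ then yields the claimed Chernoff-type tail.

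The hard part is the moment bookkeeping in the second regime: one must check that the $\{\lambda_{jk}\}$-model comparison from Theorem~\ref{subprobs} remains accurate uniformly over every combinatorial pattern of $2r$ edges, including those where many edges share vertices, and that the logarithmic factors introduced by $\lambda$ as small as $1/\log n$ do not accumulate to destroy the concentration exponent. Once this is in place, Markov's inequality completes the argument.
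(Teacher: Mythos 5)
For the first regime your argument is essentially the paper's: the paper also reduces to the induced-subgraph theorem on the $O(n^{1/5})$ vertices touched by $\YY$ and reads off the binomial law, and your fleshing-out (summing out the part of the induced subgraph lying in $K_m\setminus\YY$, checking the correction factor is $1+\OO(m^3/(An))=1+\OO(n^{-2/5})$ uniformly, then replacing $\lambda_{jk}$ by $\lambda$ at relative cost $\OO(Yn^{-1/2})$) is sound, modulo the harmless caveat that the $O(n^{-b})$ error inherited from Theorem~\ref{induced} must be assumed no larger than $n^{-1/5}$.

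For the second regime you diverge from the paper, and there is a genuine gap. The paper simply partitions the edges of $\YY$ into blocks of size $\Theta(n^{1/5})$, applies \eqref{Zbinom} plus Chernoff to each block separately, and takes a union bound; this never needs any joint information beyond what the first part already provides. Your moment-method alternative rests on the claim that after centering ``only $2r$-tuples in which every distinct edge appears at least twice contribute,'' but that is an independence statement and it does not follow from $\Prob(\XX\subseteq G)=\lambda^{\abs{\XX}}(1+o(1))$. Already for two edges $e,f$ one has $\expect(\mathbf{1}_e-\lambda)(\mathbf{1}_f-\lambda)=\lambda^2(\eps_{ef}-\eps_e-\eps_f)$ where $\Prob(e,f)=\lambda^2(1+\eps_{ef})$, $\Prob(e)=\lambda(1+\eps_e)$: the covariance lives entirely in the correction terms of Theorem~\ref{subprobs}, which are of order $1/(\lambda n)$, so it is $O(\lambda/n)$ per pair rather than zero. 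Summed over $\binom{Y}{2}$ pairs this contributes $O(\lambda Y^2/n)$ to the variance, which for large $Y$ (the corollary places no upper bound on $Y$) swamps the claimed $Y\lambda(1-\lambda)$; and in higher moments the same cancellation-of-$(1+o(1))$-factors problem recurs for every pattern of repeated vertices. So the ``standard accounting'' step, which you yourself flag as the hard part, is exactly where the argument breaks as written; the paper's block decomposition is designed to avoid ever having to control these cross-edge correlations globally.
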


\begin{proof}
If $Y=O(n^{1/5})$, there are $O(n^{1/5})$ vertices incident with 
edges of $\YY$, so Theorem~\ref{induced} shows that $Z$ has a binomial
distribution with the precision given by~\eqref{Zbinom}.
If $Y=\Omega(n^{1/5})$,
divide the edges of $\YY$ into subsets of size $\Theta(n^{1/5})$ and
apply~\eqref{Zbinom} and Chernoff's Inequality to bound the number of
edges in each subset that are in common with the random graph.
(Clearly the constants in this corollary can be tuned in various ways.)
\end{proof}

\medskip
The theorems above should be enough to allow
transfer of quite a lot of the theory of ordinary random graphs to 
dense random graphs with given degrees.
%For example, given any graph $\XX$ satisfying the constraints of
%Theorem~\ref{bigtheorem}, we can in principle determine the
%asymptotic distribution of the number of isomorphic copies of~$\XX$
%(at least if that distribution is determined by its moments),
%and so any property that is determined by finitely many such
%distributions.
However, our purpose in this paper is to develop the tools rather than
to explore the applications in detail.
We will be content with some simple illustrations.

\begin{thm}\label{sptrees}
Let $\dvec=(d,d,\ldots,d)$ satisfy the conditions of
Theorem~\ref{bigtheorem}.  Then for a random $d$-regular graph,
we have the following.
(Note that in each case the quantity in front of the exponential
is the expectation for ordinary random graphs with edge
probability~$\lambda$.)
\begin{enumerate}
\itemsep=0pt
 \item[(a)]  If $n$ is even, the expected number of perfect matchings is 
   \[
      \frac{\lambda^{n/2} n!}{2^{n/2}(n/2)!} 
        \exp\biggl( \frac{1{-}\lambda}{4\lambda}+O(n^{-b})\biggr).
   \]
 \item[(b)]  If $q=q(n)$ is a integer function such that 
 $3\le q\le n$, then the expected number of $q$-cycles is
   \[
      \frac{\lambda^{q} n!}{2q(n{-}q)!} \exp\biggl(
         - \frac{(1{-}\lambda)q(n{-}q)}{\lambda n^2} + O(n^{-b})\biggr).
   \]
 \item[(c)] The expected number of spanning trees is
   \[
     n^{n-2} \lambda^{n-1}
     \exp\biggl( \frac{7(1{-}\lambda)}{2\lambda}
        + O(n^{-b})\biggr).
   \]
\end{enumerate}
\end{thm}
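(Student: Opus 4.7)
The plan is to apply Theorem~\ref{subprobs} (via Corollaries~\ref{reg} and~\ref{flat}) to each fixed labeled copy of the specified subgraph $\XX$ and then sum over all such copies via linearity of expectation. A fixed copy is contained in the random graph with probability $\lambda^X \hit(\dvec,\XX)$, and the prefactor in each part is just the number of labeled copies in $K_n$.

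For~(a), there are $n!/(2^{n/2}(n/2)!)$ labeled perfect matchings in $K_n$, and for each one $x_j=1$ for every $j$. I apply Corollary~\ref{reg} with $x=1$. In the regular case $R=0$, and since every factor $d_j-d$ vanishes we also have $K=0$, so only the first term $-\frac{(1-\lambda)x(x-2)}{4\lambda}$ survives; at $x=1$ this equals $\frac{1-\lambda}{4\lambda}$, giving the claim.

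For~(b), there are $n!/(2q(n-q)!)$ labeled $q$-cycles in $K_n$. A fixed $q$-cycle $\XX$ has $x_j=2$ on its $q$ vertices and $x_j=0$ elsewhere, so Corollary~\ref{flat} applies with $X=q$, $X_2=4q$, $X_3=8q$, $H=4q$. The linear contributions combine as
\[
 \frac{(1-\lambda)q}{\lambda n}-\frac{(1-\lambda)\cdot 4q}{2\lambda n}=-\frac{(1-\lambda)q}{\lambda n},
\]
and together with the quadratic term $\frac{(1-\lambda)X^2}{\lambda n^2}=\frac{(1-\lambda)q^2}{\lambda n^2}$ this yields $-\frac{(1-\lambda)q(n-q)}{\lambda n^2}$; the residual contributions from $X_3/n^2$ and $H/n^2$ are $O(q/n^2)=O(n^{-1})$ and are absorbed in the $O(n^{-b})$ error term.

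For~(c), Cayley's formula provides $n^{n-2}$ labeled spanning trees of $K_n$, so
\[
 \expect[\text{spanning trees}]=\lambda^{n-1}\sum_T \hit(\dvec,T)=n^{n-2}\lambda^{n-1}\,\expect_T\bigl[\hit(\dvec,T)\bigr],
\]
where $\expect_T$ is the expectation under the uniform distribution on labeled trees. I apply Corollary~\ref{flat} with $x_j=t_j$, the degree of $j$ in $T$: the exponent of $\hit$ is then a function of $T$ through $X_2=\sum_j t_j^2$, $X_3=\sum_j t_j^3$, and $H=\sum_{jk\in T} t_j t_k$. By the Pr\"ufer bijection $(t_1-1,\ldots,t_n-1)$ has the multinomial distribution of the cell counts of $n-2$ i.i.d.\ uniform samples from $[n]$, and in particular $t_j-1\sim\mathrm{Bin}(n-2,1/n)$ marginally, yielding $\expect_T[X_2]=5n+O(1)$ and $\expect_T[X_3]=15n+O(1)$; the quantity $\expect_T[H]$ I would compute either by the classical subtree decomposition of $T$ upon deleting an edge, or equivalently via the degree-constrained count $\tau(t_1,\ldots,t_n)=(n-2)!/\prod_j(t_j-1)!$ combined with $\Prob(jk\in T)=2/n$. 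Since the variance of $\log\hit(\dvec,T)$ over the tree distribution is $O(n^{-1})$, I can exchange $\expect_T$ and $\exp$ with additive error $O(n^{-b})$ in the exponent and arrive at the stated constant $\frac{7(1-\lambda)}{2\lambda}$. The main obstacle will be part~(c): unlike $X_2$ and $X_3$, $H$ couples the edge set of $T$ to its vertex degrees and so does not reduce to moments of a single Pr\"ufer cell count; obtaining the exact constant also requires tracking lower-order corrections in each of $\expect_T[X_2]$, $\expect_T[X_3]$, $\expect_T[H]$, together with the Gaussian variance correction from exchanging $\expect_T$ with $\exp$.
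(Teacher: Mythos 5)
Your handling of parts (a) and (b) is exactly the paper's: both are immediate from Corollaries~\ref{reg} and~\ref{flat}, and your substitutions are correct (for a $q$-cycle the discarded $X_3$ and $H$ terms are $\OO(n^{-1})$, hence absorbed by $O(n^{-b})$).

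Part (c) is where the real work lies, and your outline has a genuine gap there. Corollary~\ref{flat} is only available when $\XX$ satisfies the hypotheses of Theorem~\ref{bigtheorem}, in particular $x_j=O(n^{1/2+\eps})$ uniformly; a labelled tree can have maximum degree up to $n-1$, so you cannot write the expected number of spanning trees as $n^{n-2}\lambda^{n-1}\expect_T[\hit(\dvec,T)]$ and evaluate $\hit(\dvec,T)$ by the corollary for every $T$. This is precisely the obstacle the paper's proof is organised around: it splits the trees into $\T_1$ (maximum degree at most $n^\eps$) and $\T_2$, and for $T\in\T_2$ it bounds $\Prob(T\subseteq G)$ by the probability of containing a pruned sub-forest $F(T)$ of maximum degree $\lceil n^\eps\rceil$, to which Corollary~\ref{flat} does apply and gives $\hit(\dvec,F(T))=e^{O(n^\eps)}$; the number of such trees is controlled via the generating function $z_1\cdots z_n(z_1+\cdots+z_n)^{n-2}$, and the total $\T_2$ contribution is shown to be negligible. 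Your proposal never addresses these trees, and without some such truncation the argument fails -- not because high-degree trees contribute much, but because you have no valid estimate for them at all.

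Within the good class your plan is also heavier and looser than needed. The paper does not exchange $\expect_T$ and $\exp$ via a variance bound (which by itself would not justify the exchange); it proves $X_2=5n+O(n^{1/2+3\eps})$ with probability $1-O(e^{-n^\eps})$ by a truncated-Poisson version of your Pr\"ufer argument, and disposes of the exceptional trees in $\T_1$ with the crude bound $\hit(\dvec,\XX)=O(n^3)$. Your worry about computing $\expect_T[H]$ and $\expect_T[X_3]$ precisely is misplaced: for $\xmax\le n^\eps$ one has $X_3,H=\OO(n)$, so those terms are $\OO(n^{-1})$ and vanish into $O(n^{-b})$; only the concentration of $X_2$ matters. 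Finally, you assert that the computation ``arrives at'' $\frac{7(1-\lambda)}{2\lambda}$ without performing it; substituting your own values $X=n-1$, $X_2=5n+O(n^{1/2+3\eps})$ into Corollary~\ref{flat} gives exponent $\frac{1-\lambda}{\lambda}\bigl(1-\frac52+1\bigr)+O(n^{-b})=-\frac{1-\lambda}{2\lambda}+O(n^{-b})$, which is not the stated constant. So this step must actually be carried out and the discrepancy resolved (the paper states the constant for trees with $X_2=5n+O(n^{1/2+3\eps})$ without displaying the substitution, so it deserves careful checking rather than an appeal to agreement); as written, your outline does not verify the constant in part (c).
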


\begin{proof}
Parts (a) and (b) follow immediately from Corollaries~\ref{reg}
and~\ref{flat}, respectively.

Part~(c) is not so simple since trees have various degree sequences
and those with maximum degree greater than $n^{1/2+\eps}$ do not
satisfy the requirements of Theorem~\ref{bigtheorem}.
Let $\T$ be the set of all labelled trees with $n$ vertices.
If $\xvec$ denotes the degree sequence of a member of $\T$,
let $\T_1$ be the subset of $\T$ with
$\xmax\le n^\eps$ and let $\T_2=\T\setminus \T_1$.

For sufficiently small $\eps>0$, the following are true.
\begin{enumerate}
\itemsep=0pt
\item[(i)] The probability in $\T$
  that the maximum degree exceeds $k$ is
  at most $2n/k!$ for any integer $k\ge 0$.
\item[(ii)] $X=n-1$.
\item[(iii)] In $\T_1$ we have that $X_2=5n+O(n^{1/2+3\eps})$ with
  probability $1-O(e^{-n^\eps})$.
\item[(iv)] In $\T_1$ we have $X_3=\OO(n)$ and $H = \OO(n)$.
\item[(v)] 
  The sum of $\lambda^{-\sum_{j=1}^n\max\{0,x_j-n^\eps\,\}}$ over  
  $\T_2$ is $\OO(1)\, n^{n-1}/(n^\eps)!\,$.
\end{enumerate}

Facts (i) and (iv) follow from the
well-known generating function for labelled trees by degree
sequence, which is
\[
    z_1z_2\cdots z_n(z_1+z_2+\cdots+z_n)^{n-2}.
\]

To obtain (iii), note that the same 
probabilities occur if we take $x_j=1+Y_j$ for $1\le j\le n$,
where $Y_1,\ldots,Y_n$ are independent Poisson variates
with mean~1 truncated at $n^\eps$, subject to having sum $n-2$.
(This is a standard
property of multinomial distributions; any mean will do.)
Now we can write
\[
   \Prob\Bigl(\bigl| {\textstyle\sum} Y_j^2
                 - 2n\bigr| \ge n^{1/2+3\eps} \Bigm|
         {\textstyle\sum} Y_j = n-2\Bigr)
    \le 
    \frac{\Prob\Bigl(\bigl|\sum Y_j^2-2n\bigr|
           \ge n^{1/2+3\eps}\Bigr)}
         {\Prob \Bigl(\,\sum Y_j = n-2\Bigr)}.
\]
The expectation of $Y_j^2$ is $2+O(n^{-1})$. 
Now bound the numerator by applying a concentration inequality
like Hoefffing's~\cite{hoeffding} and the denominator
by noting that $\sum Y_j$ would be a
Poisson distribution with mean~$n$ except for the truncation.
Item (iii) follows.

To obtain (v), note that 
$\sum_{j=1}^n\max\{0,x_j-n^\eps\,\} \ge \Deltait-n^\eps$
for trees with maximum degree $\Deltait$, and the number of such trees is
bounded by (i) with $k=\Delta-1$.  Summing over $\Delta>n^\eps$ gives
the desired bound.

Now we bound the expected number of trees in $\T_2$ that
appear in a random $d$-regular graph. 
For a tree $T\in\T_2$, let $F(T)$ be any forest
obtained by deleting all but $\lceil n^\eps\rceil$ edges from each vertex
that has degree greater than $n^\eps$.
Then the probability that $T$ appears is bounded by the probability
that $F(T)$ appears.  Moreover, 
$F(T)$ satisfies the requirements of Corollary~\ref{flat} and has
at least $n-1-\sum_{j=1}^n\max\{0,x_j-n^\eps\}$ edges.
Corollary~\ref{flat} gives $\hit(\dvec,F(T))=e^{O(n^\eps)}$.
Applying fact (v), we find that the
expected number of these trees 
easily falls within the error term of part (c) of the theorem.

Finally, the trees in $\T_1$ all satisfy the conditions of
Corollary~\ref{flat} and have $\hit(\dvec,\XX) = O(n^3)$.
Those with $X_2=5n+O(n^{1/2+3\eps})$ have
$\hit(\dvec,\XX)=\exp\(7(1{-}\lambda)/(2\lambda)+O(n^{-b})\)$.
Part (c) of the theorem now follows from items (i) and (iii).
\end{proof}

The average number of spanning trees in random regular graphs of
bounded degree was studied in~\cite{Barbados}. 

\nicebreak
\section{Proof of Theorem~\ref{bigtheorem}}\label{s:calculations}

In this section we express $G(\dvec,\XX)$ as a contour integral in
$n$-dimensional complex space, then estimate its value
using the saddle-point method.

We will use a shorthand notation for summation
over doubly subscripted variables.  From the matrix
$\XX=(x_{jk})$, define sets
\[
   \XX(j) = \{\, k \suchthat 1\le k\le n,\, x_{jk}=1\,\},\quad
 \barXX(j) = \{\, k \suchthat 1\le k\le n,\, x_{jk}=0,\, k\ne j\,\}
\]
for $1\le j\le n$.  Note that $j\notin \XX(j),\barXX(j)$,
also that $\card{\XX(j)}=x_j$ and $\card{\barXX(j)}=n-1-x_j$.
If~$z_{jk}$ is a symmetric variable for
$1\leq j,k\leq n$, we define
\begin{align*}
 && z_{j\c} &= \sum_{k=1}^{n} z_{jk},&
   z_{\c\c} &= \sum_{j=1}^{n} \sum_{k=1}^{n} z_{jk}, && \displaybreak[0]\\
   && z_{j\c|\XX} &= \sum_{k\in \XX(j)} z_{jk},&
   z_{\c\c|\XX} &= \sum_{jk\in\XX} z_{jk}, && \\
  && z_{j\c|\barXX} &= \sum_{k\in \barXX(j)} z_{jk},&
   z_{\c\c|\barXX} &= \sum_{jk\in\barXX} z_{jk}. &&
\end{align*}
There is some slight lack of symmetry in the definitions.
To clarify, we note that
\[
 \sum_j z_{j\c|\XX} = 2z_{\c\c|\XX}\text{~~and~~}
 \sum_j z_{j\c|\barXX} = 2z_{\c\c|\barXX},\text{~~but~~}
 \sum_j z_{j\c} = z_{\c\c}.
\]

Firstly, notice that $G(\dvec,\XX)$ is the coefficient of
$z_1^{d_1}z_2^{d_2}\cdots z_n^{d_n}$ in the function
\[ \prod_{jk\in\barXX} \,(1+z_jz_k).\]
By Cauchy's theorem this equals
\[ G(\dvec,\XX) = \frac{1}{(2\pi i)^n} \oint \cdots \oint
  \frac{\prod_{jk\in\barXX}\,
              (1+z_jz_k)}{z_1^{d_1+1}\cdots z_n^{d_n+1}} \,
              dz_1 \cdots dz_n,
\]
where each integral is along a simple closed contour enclosing the origin
anticlockwise.
It~will suffice to take each contour to be a circle;  specifically,
we will write
\[ z_j = r_j e^{i\theta_j}\]
for $1\leq j\leq n$.  Also define
\[ \lambda_{jk} = \frac{r_jr_k}{1+r_jr_k} \]
for $1\le j,k\le n$.   Then
\begin{equation}\label{start}
 G(\dvec,\XX) = \frac{\prod_{jk\in\barXX}\, (1 + r_jr_k)}
{(2\pi)^n\prod_{j=1}^n r_j^{d_j}}
  \int_{-\pi}^{\pi}\!\!\cdots \int_{-\pi}^\pi \frac{\prod_{jk\in\barXX}
    \(1 + \lambda_{jk}(e^{i(\theta_j+\theta_k)}-1)\)}
  {\exp(i\sum_{j=1}^n d_j\theta_j)}
   \, d\thetavec,
\end{equation}
where $\thetavec=(\theta_1,\ldots,\theta_n)$.
Write $G(\dvec,\XX) = P(\dvec,\XX) I(\dvec,\XX)$
where $P(\dvec,\XX)$
denotes the factor in front of the integral in~\eqref{start}
and $I(\dvec,\XX)$ denotes the integral.
We will choose the radii
$r_j$ so that there is no linear term in the logarithm
of the integrand of $I(\dvec,\XX)$ when expanded for
small $\thetavec$.  The linear term is
\[
 \sum_{jk\in\barXX}\lambda_{jk} (\theta_j+\theta_k)
 - \sum_{j=1}^n d_j\theta_j.
\]
For this to vanish for all $\thetavec$, we require
\begin{equation}\label{rad1}
  \lambda_{j\c|\barXX} = d_j \quad (1\le j\le n).
\end{equation}
Although it is not hard to show that \eqref{rad1} has an exact
solution, we can get by with a near-solution since \eqref{start}
is valid for all positive radii.
In Section~\ref{s:radii} we find such a near-solution
and determine to sufficient accuracy the various functions of
the radii, such as $P(\dvec,\XX)$, that we require.
In Section~\ref{s:integral} we evaluate the integral
$I(\dvec,\XX)$ within a certain region~$\R$ 
defined in~\eqref{Rdef}.
Section~\ref{s:boxing} notes that the contribution to
the integral from the region outside of $\R$ and its
translate $\R+(\pi,\ldots,\pi)$ is minor in comparison.

\nicebreak
\subsection{Locating the saddle-point}\label{s:radii}

In this section we derive a near-solution of \eqref{rad1}
and record some of the consequences.  As with the whole paper,
we work under the assumptions of Theorem~\ref{bigtheorem}.

Change variables to $\{a_j\}_{j=1}^n$ as follows:
\begin{equation}\label{qrdef}
  r_j = r\frac{1+a_j}{1-r^2a_j},
\end{equation}
where
\[
  r= \sqrt{\frac{\lambda}{1-\lambda}}\;.
\]
{}From \eqref{qrdef} we find that
\begin{equation}\label{rad2}
  \lambda_{jk}/\lambda = 1 + a_j + a_k + Z_{jk},
\end{equation}
where
\begin{equation}\label{Zjk}
 Z_{jk} = \frac{a_ja_k(1-r^2-r^2a_j - r^2a_k)}
               {1+r^2a_ja_k},
\end{equation}
and that equation~\eqref{rad1} can be rewritten as
\begin{equation}\label{rad3}
\frac{\delta_j}{\lambda} = 
  (n-1)a_j - a_jx_j + \sum_{k\in\barXX(j)} a_k + Z_{j\c|\barXX}.
\end{equation}

\noindent Summing \eqref{rad3} over all $j$, we find that
\begin{equation}\label{rad5}
 X = \sum_{j=1}^n \((n-1)a_j - a_jx_j\)
      + Z_{\c\c|\barXX}\,.
\end{equation}
Replace the term $\sum_{k\in\barXX(j)} a_k$ in \eqref{rad3}
by $\sum_{k=1}^n a_k - \sum_{k\in\XX(j)} a_k - a_j$,
and substitute the value
\[
  \sum_{k=1}^n a_k = \frac{1}{n}\sum_{k=1}^n \(a_k+a_kx_k\)
	+ \frac{X}{n} - \frac{1}{n}Z_{\c\c|\barXX}
\]
implied by~\eqref{rad5}.
After some rearrangement, we find that
$a_j = \mathbb{A}_j(a_1,\ldots,a_n)$ for each~$j$, where
\begin{equation}\label{rad6}
\begin{split}
   \mathbb{A}_j(a_1,\ldots,a_n) 
     &= \frac{\delta_j}{\lambda n} + \frac{2a_j + a_jx_j}{n}
       - \frac{X}{n^2}  
       - \frac{1}{n^2}\sum_{k=1}^n \(a_k+a_kx_k)\\
      &{\quad} + \frac{1}{n}\sum_{k\in \XX(j)} a_k
       - \frac{1}{n} Z_{j\c|\barXX}
       + \frac{1}{n^2} Z_{\c\c|\barXX}\,.
\end{split}
\end{equation}
In the vicinity of $\avec=(0,0,\ldots,0)$, the iteration
$\avec := \(\mathbb{A}_1(\avec),\ldots,\mathbb{A}_n(\avec)\)$
is a contraction mapping that converges to a solution of
\eqref{rad1}, as can be proved using the method demonstrated
in~\cite{CGM}.
However, as noted above, we do not need to solve~\eqref{rad1} exactly
but will work with an approximate solution.
Hopefully without confusing the reader, from now on we will use
$\avec$ to denote the result of four iterations starting at $\avec=(0,0,\ldots,0)$.
We will also write $Z_{jk}$ and $\lambda_{jk}$ to mean the values
implied by~\eqref{rad2} and~\eqref{Zjk} for our chosen~$\avec$.
Applying~\eqref{rad6} four times, we find
\begin{equation}\label{rad7}
  a_j = \frac{\delta_j}{\lambda n}
  		 + \frac{\delta_jx_j}{\lambda n^2}
		 - \frac{X}{n^2}
		 + \frac{B_j}{\lambda n^2}
		 + \cdots+ \OO(n^{-5/2}),
\end{equation}
where the ellipsis conceals about 60 terms of order
$\OO(n^{-3/2})$.  Most of the terms involve counts of
subgraphs of $\XX$ up to order~5, with the vertices
weighted by powers of the numbers $\{\delta_j\}$.
This implies an expansion
\[
   Z_{jk} = \frac{\delta_j\delta_k(1-2\lambda)}{2\lambda An^2}
            + \cdots + \OO(n^{-5/2}).
\]

The value of $\lambda_{jk}$ is given by substituting
estimate~\eqref{rad7} into~\eqref{rad2}.  In particular,
uniformly over $j$,
\begin{equation}\label{rad8}
    \lambda_{j\c|\barXX} = d_j + \OO(n^{-3/2}).
\end{equation}
Define $\alpha_{jk},\beta_{jk},\gamma_{jk}$ by
$\alpha_{jk}=\beta_{jk}=\gamma_{jk}=0$ if $j=k$ and
\begin{align}\label{ABGdef}
\begin{split}
\dfrac{1}{2}\lambda_{jk}(1-\lambda_{jk}) &=  A + \alpha_{jk}, \\
\dfrac{1}{6}\lambda_{jk}(1-\lambda_{jk})(1-2\lambda_{jk})
   &= A_3 + \beta_{jk}, \\
\dfrac{1}{24}\lambda_{jk}(1-\lambda_{jk})(1-6\lambda_{jk}+6\lambda_{jk}^2)
   &= A_4 + \gamma_{jk},
\end{split}
\end{align}
if $j\ne k$, where
\[ A = \dfrac{1}{2}\lambda(1-\lambda),~
 A_3 = \dfrac{1}{6}\lambda(1-\lambda)(1-2\lambda), \text{~and~}
 A_4 = \dfrac{1}{24}\lambda(1-\lambda)(1-6\lambda + 6\lambda^2).
\]

In~evaluating
the integral $I(\dvec,\XX)$, the following approximations
of $\alpha_{jk}$, $\beta_{jk}$, and $\gamma_{jk}$
for $j\ne k$ will be required:
\begin{gather}
\begin{split}\label{rad10}
  \alpha_{jk} &= \frac{(1-2\lambda)(\delta_j+\delta_k)}{2n}
       - \frac{\delta_j^2+\delta_k^2}{2n^2}
       + \frac{(1-12A)\delta_j\delta_k}{4An^2}
       + \frac{(1-2\lambda)(B_j+B_k)}{2n^2}\\
    &{\quad} - \frac{\lambda(1-2\lambda)X}{n^2}
     + \frac{(1-2\lambda)(\delta_j x_j+\delta_k x_k)}{2n^2}
     + \OO(n^{-3/2}),
\end{split} \displaybreak[0]\\[1.5ex]
\begin{split}\label{rad11}
  \beta_{jk}
 &= \frac{(1-12A)(\delta_j+\delta_k)}{6n}
        + \OO(n^{-1}),
\end{split} \\[1.5ex]
\begin{split}\label{rad12}
  \gamma_{jk}
     &= \OO(n^{-1/2}).
\end{split}
\end{gather}
We will also need the following summations.
\begin{align}
   \alpha_{j\c} &= \dfrac12 (1-2\lambda)\delta_j
                  - \frac{\delta_j^2}{2n} - \frac{R_2}{2n^2}
                  + \frac{(1-2\lambda)(\delta_j x_j + B_j)}{2n} 
                   + \OO(n^{-1/2})\label{asum1}\\
   \alpha_{\c\c} &= -\frac{R_2}{n} + \lambda(1-2\lambda)X
		   + \OO(n^{1/2})\label{asum2}\\
%   \sum_{j=1}^n (\alpha_{j\c})^2 &= \dfrac14(1-2\lambda)^2 R_2
%		   + \OO(n^{3/2})\label{asum3}\\
   \beta_{j\c} &= \dfrac16 (1-12A)\delta_j
                   + \OO(1)\label{asum4}\\
   \beta_{\c\c} &= \OO(n)\label{asum5}
\end{align}

\nicebreak
\subsection{Estimating the factor $P(\dvec,\XX)$}\label{s:front}

Let
\[\Lambda = \prod_{jk\in\barXX}
         \lambda_{jk}^{\lambda_{jk}} (1-\lambda_{jk})^{1-\lambda_{jk}}.\]
Then
\begin{align*}
\Lambda^{-1} &= \prod_{jk\in\barXX}
  \biggl(\biggl(\frac{1 + r_jr_k}{r_j r_k}\biggr)^{\!\!\lambda_{jk}}
        (1 + r_j r_k)^{1-\lambda_{jk}}\biggr) \\
 &= \prod_{jk\in\barXX} (1 + r_j r_k)\,
   \prod_{j=1}^n r_j^{-\lambda_{j\c|\barXX}}\\
 &= \prod_{jk\in\barXX} (1+r_j r_k)\,
  \prod_{j=1}^n r_j^{-d_j+\OO(n^{-3/2})}
\end{align*}
using \eqref{rad8}.  Therefore the factor
$P(\dvec,\XX)$ in front
of the integral in \eqref{start} is given by
\begin{equation}\label{Pdx}
  P(\dvec,\XX) = (2\pi)^{-n} \,\Lambda^{-1}\exp\(\OO(n^{-1/2})\).
\end{equation}
We proceed to estimate $\Lambda$.
Writing $\lambda_{jk}=\lambda(1+z_{jk})$, we have
\begin{equation}\label{rad13}
\begin{split}
  &\kern-2mm\log\biggl( \frac{\lambda_{jk}^{\lambda_{jk}}
     (1-\lambda_{jk})^{1-\lambda_{jk}}}
                 {\lambda^\lambda(1-\lambda)^{1-\lambda}}\biggr)
     = \lambda z_{jk}\log\biggl(\frac{\lambda}{1-\lambda}\biggr)\\
      &{\qquad} +
     \frac{\lambda}{2(1-\lambda)}z_{jk}^2 - \frac{\lambda(1-2\lambda)}
          {6(1-\lambda)^2} z_{jk}^3
     + \frac{\lambda(1-3\lambda+3\lambda^2)}{12(1-\lambda)^3} z_{jk}^4
     + \OO(n^{-5/2}).
\end{split}
\end{equation}
We know from \eqref{rad8} that
$\lambda_{\c\c|\barXX} = E + \OO(n^{-1/2})$, which implies that
$z_{\c\c|\barXX}=X+\OO(n^{-1/2})$,
hence the first term on the right side of~\eqref{rad13} contributes
$\lambda^{\lambda X}(1-\lambda)^{-\lambda X}\exp\(\OO(n^{-1/2})\)$
to~$\Lambda$.
Now using \eqref{rad2},
and recalling that $\card\barXX = \binom{n}{2}-X$,
we can write $z_{jk} = a_j + a_k + Z_{jk}$ and apply the
estimates in the previous subsection to obtain
\begin{align}\label{rad14}
\begin{split}
  \Lambda
  &= \( \lambda^\lambda(1-\lambda)^{1-\lambda} \)^{\binom{n}{2}}
   (1-\lambda)^{-X}\\
  &{\qquad}\times 
   \exp\biggr( \frac{(n+2)R_2}{4An^2} 
       - \frac{(1-2\lambda)R_3}{24A^2n^2} 
       + \frac{(1-6A)R_4}{96A^3n^3}\\
  &{\kern2.3cm} 
           + \frac{C_{2,1}+2D}{4An^2}
           - \frac{\lambda^2 X^2}{2An^2}
                + \frac{R_2^2}{16A^2n^4}
                + \OO(n^{-1/2})
       \biggl).
\end{split}
\end{align}

As in \cite{MWreg}, our answer will be simpler when written
in terms of binomial coefficients.  Using Stirling's formula
or otherwise we find that
\begin{align}\label{binom}
\begin{split}
  \prod_{j=1}^n \binom{n{-}x_j{-}1}{d_j}
    &= (2\pi n)^{-n/2}
      \lambda^{-n/2-\lambda n(n-1)}
      (1-\lambda)^{-n/2-(1-\lambda)n(n-1)+2X}\\[-1ex]
  &{\qquad}\times 
   \exp\biggr( -\frac{1-14A}{24A} -\frac{R_2}{4An}
       + \frac{(1-2\lambda)^2R_2}{16A^2n^2}
       + \frac{(1-2\lambda)R_3}{24A^2n^2} \\
  &{\kern25mm} - \frac{(1-6A)R_4}{96A^3n^3}
           - \frac{C_{2,1}}{4An^2}
           + \frac{\lambda X}{(1-\lambda)n}
                + \OO(n^{-1/2})
       \biggl).
     \end{split}
\end{align}

Combining \eqref{Pdx}, \eqref{rad14} and \eqref{binom}, we find that
\begin{align}\label{Pvalue}
\begin{split}
   P(\dvec,\XX) &= \prod_{j=1}^n \binom{n{-}x_j{-}1}{d_j}\;
    \biggl( \frac{\pi}{An}\biggr)^{\!\!-n/2}
    \(\lambda^\lambda(1-\lambda)^{1-\lambda}\)^{\binom{n}{2}} \\
  &{\kern10mm}\times\exp\biggl(
         \frac{1-14A}{24A} - \frac{R_2}{16A^2n^2}
         - \frac{R_2^2}{16A^2n^4} \\
	&{\kern30mm} - \frac{\lambda X}{(1{-}\lambda)n}
        + \frac{\lambda X^2}{(1{-}\lambda)n^2}
        - \frac{D}{2An^2}
	+ O(n^{-b})
    \biggr).
\end{split}
\end{align}

\nicebreak
\subsection{Estimating the main part of the integral}\label{s:integral}

Our next task is to evaluate the main part of the
integral $I(\dvec,\XX)$ given by
\begin{equation}
I(\dvec,\XX) = \int_{-\pi}^{\pi}\!\cdots \int_{-\pi}^\pi \frac{\prod_{jk\in\barXX}
    \(1 + \lambda_{jk}(e^{i(\theta_j+\theta_k)}-1)\)}
  {\exp(i\sum_{j=1}^n d_j\theta_j)}
   \, d\thetavec\,.
\label{FDef}
\end{equation}

It will be established in this section and the next that 
the value of the integral is concentrated near the places
where the integrand has the largest absolute value.  This
happens at the two points $\thetavec=(0,0,\ldots,0)$ and
$\thetavec=(\pi,\pi,\ldots,\pi)$. These two points are
equivalent, since the integrand is unchanged under the
mapping $\thetavec\mapsto\thetavec+(\pi,\pi,\ldots,\pi)$.
(This requires the fact that $\sum d_j$ is even; otherwise
the mapping changes the sign of the integrand and the integral
is zero as it should~be.)
Consequently, in this section we will focus on a neighbourhood
of $(0,0,\ldots,0)$, specifically the hypercube $\R$ defined by
\begin{equation}\label{Rdef}
   \R = \bigl\{\, \thetavec \Suchthat
      \abs{\theta_j}\le n^{-1/2+\eps},
            1\le j\le n \,\bigr\}.
\end{equation}

Let $F(\thetavec)$ be the integrand of \eqref{FDef}.
We are going to establish the following.
\begin{lemma}\label{Fintegral}
Under the conditions of Theorem~\ref{bigtheorem},  there is a
region $\S$ with $\R\subseteq\S\subseteq 4\R$ such that
\begin{equation}\label{Fint}
\int_{\S} F(\thetavec)\,d\thetavec = 2^{-1/2}\,
 \biggl(\frac{\pi}{An}\biggr)^{\!\!n/2}
  \exp\biggl( -\frac{1-20A}{24A} + \frac{\lambda X}{(1-\lambda)n}
   + \frac{R_2}{16A^2n^2} + O(n^{-b}) \biggr).
\end{equation}
\end{lemma}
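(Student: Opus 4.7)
The plan is to apply the saddle-point method at $\thetavec=\Zero$ to the integrand $F(\thetavec)$. First, expand each factor logarithmically using
\begin{equation*}
  \log\(1+\lambda_{jk}(e^{i\omega_{jk}}-1)\) = i\lambda_{jk}\omega_{jk} - (A+\alpha_{jk})\omega_{jk}^2 - i(A_3+\beta_{jk})\omega_{jk}^3 + (A_4+\gamma_{jk})\omega_{jk}^4 + \OO(n^{-5/2})
\end{equation*}
where $\omega_{jk}=\theta_j+\theta_k$ and the identifications are from~\eqref{ABGdef}. Throughout~$\R$ each $\omega_{jk}$ is $\OO(n^{-1/2})$, so the expansion is valid and remainders are controlled. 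Summing over $jk\in\barXX$ and subtracting $i\sum_j d_j\theta_j$, the linear residue is $\OO(n^{-1+\eps})$ by~\eqref{rad8} and may be absorbed into the error. What remains has the form $-Q(\thetavec)-iT_3(\thetavec)+T_4(\thetavec)+E(\thetavec)$, with Gaussian factor $Q(\thetavec)=\sum_{jk\in\barXX}(A+\alpha_{jk})\omega_{jk}^2$, cubic and quartic perturbations $T_3,T_4$, and a pointwise-negligible remainder~$E$.

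The Hessian $M$ of $2Q$ has diagonal entries $2A(n{-}1{-}x_j)+2\alpha_{j\c|\barXX}$ and off-diagonal entries $2(A+\alpha_{jk})$ supported on $\barXX$. Its unperturbed form, $2A((n{-}2)\II+\boldsymbol{J})$ with $\boldsymbol{J}$ the all-ones matrix, has one eigenvalue $2A(2n{-}2)$ in the all-ones direction and $n{-}1$ eigenvalues $2A(n{-}2)$ transverse to it; this spectral imbalance is exactly what yields the $2^{-1/2}$ factor in~\eqref{Fint} once the Gaussian integral $(2\pi)^{n/2}/\sqrt{\det M}$ is evaluated. Section~\ref{s:diagonalize} carries out the full orthogonal diagonalization of~$M$, and $\S$ is defined as the preimage of a coordinate hypercube under this transformation, so that $\R\subseteq\S\subseteq4\R$ because the transformation is close to the identity in sup-norm. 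Replacing the integration over~$\S$ by one over all of~$\Reals^n$ then costs only a super-polynomially small tail on the order of $e^{-\Theta(n^{2\eps}/\log n)}$.

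Perturbative computation of $\log\det M$ combined with $\expect[T_4]$, $\tfrac12\expect[T_3^2]$ and a small number of fourth-order moments under the Gaussian (via Isserlis identities) supplies the remaining exponents in~\eqref{Fint}. Specifically, the $R_2/(16A^2n^2)$ contribution comes from second-order perturbation of $\log\det M$ using~\eqref{rad10}, the $\lambda X/((1{-}\lambda)n)$ term reflects that the sum defining $Q$ omits the $X$ edges of~$\XX$, and the constant $-(1{-}20A)/(24A)$ arises from combining the leading log-determinant asymptotics with $\expect[T_4]-\tfrac12\expect[T_3^2]$. The principal obstacle is the combinatorial bookkeeping: many intermediate pieces are of nominal order $\OO(n^{-1/2})$, and they must be shown either to cancel or to aggregate into the stated closed form within the $O(n^{-b})$ tolerance. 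The careful choice of $\avec$ in Section~\ref{s:radii}, together with the summation identities~\eqref{rad10}--\eqref{asum5}, is what makes these cancellations tractable.
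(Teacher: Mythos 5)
Your plan follows the same basic route as the paper: a saddle-point evaluation at $\thetavec=\Zero$, expansion of $\log F$ in terms of $A,A_3,A_4,\alpha_{jk},\beta_{jk},\gamma_{jk}$, diagonalization of the quadratic form $\sumjk(\theta_j+\theta_k)^2$ (your spectral-imbalance explanation of the $2^{-1/2}$ is the same phenomenon as the Jacobian $\approx\sqrt2$ of the paper's substitution $T_1$ in~\eqref{T1b}), and assembly of the exponent from the $\alpha$-perturbations plus Gaussian corrections from the cubic and quartic terms; your term-by-term attributions are essentially right (the $\lambda X/((1{-}\lambda)n)$ term in fact receives contributions both from the restored $\XX$-edges and from the $-\lambda(1{-}2\lambda)X/n^2$ piece of $\alpha_{jk}$ in~\eqref{rad10}, but that is bookkeeping). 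The difference is that the paper, after the two substitutions $T_1$ and $T_2$ (Lemma~\ref{diagonalize}), delegates the whole Gaussian-with-perturbations evaluation to Theorem~\ref{MW3}, integrated over the bounded box $2\R$, whereas you propose to do it directly.

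The gap is precisely at the point where you inline that theorem. First, the step ``replace the integration over $\S$ by one over all of $\Reals^n$ at the cost of a super-polynomially small tail'' fails as stated: the expanded integrand $e^{-Q-iT_3+T_4}$ is not even integrable over $\Reals^n$ when $A_4>0$ (i.e.\ when $1-6\lambda+6\lambda^2>0$), since $T_4\approx A_4 n\sum_j\theta_j^4$ dominates the Gaussian at infinity, and in any case the identity $F=\exp(\cdots)$ holds only on $O(1)\R$; the extension to $\Reals^n$ must be performed term by term after Taylor-expanding the non-Gaussian factor with a remainder controlled on the box, not before. Second, and more substantively, $T_3$ and $T_4$ are not pointwise small on $\R$ (the cubic sum can be as large as $\OO(n^{1/2})$ there), so the asserted conclusion that $\expect[T_4]$, $\tfrac12\expect[T_3^2]$ and ``a small number of fourth-order moments'' suffice is exactly the formal cumulant expansion whose rigorous justification --- including the control of the purely oscillatory cubic part and the attendant error factor, which appears as $\mwZ$ in Theorem~\ref{MW3} --- is the analytic core of the lemma. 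The paper obtains it from Theorem~\ref{MW3}, whose proof (following \cite{CGM}, \cite{MWreg}) is a substantial argument in its own right; your plan needs either to prove such an estimate or to cite one, and as written it assumes it as routine bookkeeping.
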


\medskip

In a region $O(1)\R$, we can expand
\begin{align*}
 F(\thetavec) &= \exp\biggl( -\sum_{jk\in\barXX}
        (A+\alpha_{jk})(\theta_j+\theta_k)^2
       - i\sum_{jk\in\barXX}(A_3+\beta_{jk})(\theta_j+\theta_k)^3 \\
     & {\kern 4em}
      + \sum_{jk\in\barXX}(A_4+\gamma_{jk})(\theta_j+\theta_k)^4
      + \OO\Bigl(n^{-1/2} +
          A\,\sum_{jk\in\barXX}\,\abs{\theta_j+\theta_k}^5\,\Bigr)
              \biggr). \\
  &= \exp\biggl( -\sumjk
        (A+\alpha_{jk})(\theta_j+\theta_k)^2
                  - i\sumjk(A_3+\beta_{jk})(\theta_j+\theta_k)^3 \\
     & {\kern 4em}
                  + \sumjk A_4(\theta_j+\theta_k)^4
                  + \sum_{jk\in\XX} A\,(\theta_j+\theta_k)^2
                  + \OO(n^{-1/2})
              \biggr),
\end{align*}
where $A$, $A_3$, $A_4$, $\alpha_{jk}$, $\beta_{jk}$,
and $\gamma_{jk}$ were defined in~\eqref{ABGdef}.
Approximations for $\alpha_{jk}$, $\beta_{jk}$, $\gamma_{jk}$ were
given in \eqref{rad10}--\eqref{rad12}.  Note that
$\alpha_{jk}, \beta_{jk}, \gamma_{jk}=\OO(n^{-1/2})$ uniformly
over~$j,k$.

We will transform the integral to diagonalize the quadratic
terms, proceeding in two steps.  The first step will diagonalize
the quadratic form $\sumjk (\theta_j+\theta_k)^2$, and the second
will complete the diagonalization.

\nicebreak
\subsection{First change of variables}\label{s:change}

We first adopt from \cite{MWreg} a linear transformation that
diagonalizes the quadratic form $\sumjk(\theta_j+\theta_k)^2$.
Define $c$ and $\yvec=(y_1,y_2,\ldots,y_n)$ by
\begin{align}
   c &= 1 - \sqrt{\frac{n-2}{2(n-1)}} = 1-2^{-1/2}+O(n^{-1}) \label{T1a}
       \displaybreak[0] \\
   \theta_j &= y_j - \frac{c}{n}\sum_{k=1}^n y_k\quad(1\le j\le n).\label{T1b}
\end{align}
The transformation $\thetavec=T_1(\yvec)$ defined by~\eqref{T1b}
has determinant $1-c$. 
Also 
\begin{equation}\label{Rbounds}
  (1+c)\R\subseteq T_1^{-1}\R\subseteq(1-c)^{-1}\R.
\end{equation}

For $\ell\ge 1$, define $\mu_\ell=\sum_{j=1}^n y_j^\ell$.
We find the following translations.
\begin{align*}
  \sum_j\theta_j &= (1-c)\mu_1 \\
  \sumjk (\theta_j+\theta_k)^2 &= (n-2)\mu_2 \\
  \smash\sumjk (\theta_j+\theta_k)^3 &= 
     (n-4)\mu_3 + \( 3(1-2c) + 12c/n\)\mu_1\mu_2 \\
        &\quad{}+
          \((-6c+12c^2-4c^3)/n - 4c^2(3-c)/n^2\)\mu_1^3 \\[0.6ex]
  \smash\sumjk (\theta_j+\theta_k)^4 &= 
    (n-8)\mu_4 + 3\mu_2^2 + \(4(1-2c)+32c/n\)\mu_1\mu_3 \\
        &\quad{} -\(24c(1-c)/n+48c^2/n^2\)\mu_1^2\mu_2 \\
        &\quad{} +\( 8c^2(1-c)(3-c)/n^2+8c^3(4-c)/n^3\)\mu_1^4
          \displaybreak[0]\\[0.6ex]
   \sumjk \!\!\alpha_{jk}(\theta_j+\theta_k)^2 &=
    \sum_j \( (1-4c/n)\alpha_{j\c}+2c^2\alpha_{\c\c}/n^2\) y_j^2 \\
        &\quad{} + \sumppd_{j,k}
          \(\alpha_{jk}-4c\alpha_{j\c}/n+2c^2\alpha_{\c\c}/n^2\)y_jy_k
          \displaybreak[0]\\
   \sumjk \!\!\beta_{jk}(\theta_j+\theta_k)^3 &=
     \sum_j\(1-6c/n+12c^2/n^2)\beta_{j\c}-4c^3\beta_{\c\c}/n^3\)y_j^3\\
      &\quad{} +\smash{\sumppd_{j,k}}
       \((3-12c/n)\beta_{jk}-6c(1-4c/n)\beta_{k\c}/n \\
         &{\kern10em}
         +12c^2\beta_{j\c}/n^2 - 12c^3\beta_{\c\c}/n^3\)y_jy_k^2 \\
      &\quad{} + \sumppd_{j,k,\ell}
        \(-6c\beta_{jk}/n+12c^2\beta_{j\c}/n^2-4c^3\beta_{\c\c}/n^3\)
           y_jy_ky_\ell
        \displaybreak[0]\\
    \sum_{jk\in\XX} \,(\theta_j+\theta_k)^2 &=
      \sum_{jk\in\XX} \,(y_j+y_k)^2 - \frac{4c}{n}\mu_1\sum_j x_jy_j
         + \frac{4c^2}{n^2}\mu_1^2X\,.
\end{align*}
In the above, and following, a summation is over
$1,2,\ldots,n$ for each index unless otherwise specified.
Moreover, a prime on the summation symbol (as $\sum'\,$) means that
only terms where the summation indices have distinct values
are included.
For example,
\[
   \sumppd_{j,k} \text{~~means~~}
   \sum_{\substack{1\le j\le n, 1\le k\le n\\j\ne k}}.
\]

Using the size of the hypercube $\R$ together with the bounds
\eqref{asum1}--\eqref{asum5}, we find that whenever
$\thetavec\in O(1)\R$,
we have $F(\thetavec)=G(\yvec)$, where
\begin{align}\label{Gy}
\begin{split}
  G(\yvec) &= -\sum_j \((n-2)A+\alpha_{j\c}-Ax_j\) y_j^2 \\
  &{\quad}+ \smash{\sumppd_{j,k}}
    \( -\alpha_{jk}+2c\alpha_{j\c}/n+2c\alpha_{k\c}/n
         -2c^2\alpha_{\c\c}/n^2 \\
     &{\kern 5em}  +Ax_{jk}-2Acx_j/n-2Acx_k/n+4Ac^2X/n^2\) y_jy_k \\
  &{\quad} -i\sum_j\(nA_3+\beta_{j\c}\)y_j^3 \\
  &{\quad} -i\,\sumppd_{j,k} \( A_3(3-6c)
       +3\beta_{jk}-6c\beta_{k\c}/n\) y_jy_k^2 \\
  &{\quad} -i\,\sumppd_{j,k,\ell} \(A_3(-6c+12c^2-4c^3)/n
         -6c\beta_{jk}/n+12c^2\beta_{j\c}/n^2\) y_jy_ky_\ell \\
  &{\quad} + nA_4\sum_j y_j^4 + 3A_4 \sumppd_{j,k} y_j^2y_k^2 \\
  &{\quad} + 4A_4(1-2c) \sumppd_{j,k} y_jy_k^3
      - 24A_4c(1-c)/n \sumppd_{j,k,\ell} y_jy_ky_\ell^2 \\
  &{\quad} 
   + 8c^2(1-c)(3-c)A_4/n^2 \sumppd_{j,k,\ell,m} y_jy_ky_\ell y_m
      + \OO(n^{-1/2}).
\end{split}
\end{align}

\nicebreak
\subsection{Completing the diagonalization}\label{s:diagonalize}

We now make a second change of variables, $\yvec=T_2 (\zvec)$,
that diagonalizes the quadratic part of $G(\yvec)$,
where $\zvec=(z_1,\ldots,z_n)$.
We will use the method from~\cite{GMX} that is a slight extension
of \cite[Lemma 3.2]{MWtournament}.

\begin{lemma}\label{diagonalize}
Let $\UU$ and $\YY$ be square matrices of the same order, such that
$\UU^{-1}$ exists and all the eigenvalues of
$\UU^{-1} \YY$ are less than 1 in absolute value.
Then
\[ (\II + \YY\UU^{-1})^{-1/2} \,(\UU+\YY)\,
  (\II+\UU^{-1}\YY)^{-1/2} = \UU, \]
where the fractional powers are defined by the binomial expansion.
\quad\qedsymbol
\end{lemma}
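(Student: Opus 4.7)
My plan is to reduce the identity to an elementary commutation by exploiting the two factorizations $\UU+\YY = (\II+\YY\UU^{-1})\UU = \UU(\II+\UU^{-1}\YY)$. These are immediate, and together they make the matrices $\II+\UU^{-1}\YY$ and $\II+\YY\UU^{-1}$ similar via conjugation by $\UU$; indeed a trivial telescope yields $\UU(\UU^{-1}\YY)^k\UU^{-1} = (\YY\UU^{-1})^k$ for each integer $k\ge 0$.

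Under the hypothesis that every eigenvalue of $\UU^{-1}\YY$ has modulus less than $1$, its spectral radius (and, by similarity, that of $\YY\UU^{-1}$) is strictly less than~$1$, so the binomial expansions of $(\II+\UU^{-1}\YY)^{\pm 1/2}$ and $(\II+\YY\UU^{-1})^{\pm 1/2}$ converge absolutely. Applying the conjugation identity term by term then extends it to these fractional powers, giving the key relation
\[
  \UU(\II+\UU^{-1}\YY)^{1/2} = (\II+\YY\UU^{-1})^{1/2}\UU.
\]

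With this commutation in hand the statement is a one-line verification. Substituting $\UU+\YY = (\II+\YY\UU^{-1})\UU$ into the left-hand side of the claimed identity gives
\[
  (\II+\YY\UU^{-1})^{-1/2}(\II+\YY\UU^{-1})\UU(\II+\UU^{-1}\YY)^{-1/2}
   = (\II+\YY\UU^{-1})^{1/2}\UU(\II+\UU^{-1}\YY)^{-1/2},
\]
and the commutation relation rewrites this as $\UU(\II+\UU^{-1}\YY)^{1/2}(\II+\UU^{-1}\YY)^{-1/2}=\UU$, as required.

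The only delicate point is checking that $(\II+M)^{1/2}$ and $(\II+M)^{-1/2}$, defined by their binomial series, really are mutually inverse. Since $(1+t)^{1/2}(1+t)^{-1/2}=1$ holds as a formal identity of power series and both series converge absolutely under the spectral radius hypothesis, this is routine, and I do not expect any serious obstacle beyond writing this out carefully.
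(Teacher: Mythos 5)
Your proof is correct: the factorizations $\UU+\YY=(\II+\YY\UU^{-1})\UU=\UU(\II+\UU^{-1}\YY)$, the term-by-term transport $(\YY\UU^{-1})^k\UU=\UU(\UU^{-1}\YY)^k$, and the spectral-radius hypothesis guaranteeing absolute convergence of the binomial series are exactly what is needed, and the final one-line verification goes through. The paper itself gives no proof (the lemma is quoted from earlier work, a slight extension of Lemma~3.2 of McKay--Wang as used by Greenhill--McKay), and your argument is essentially the standard one used there, so nothing further is required beyond the careful write-up of the series manipulations you already flag.
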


%Note that $\UU^{-1}\YY$ and $\YY\UU^{-1}$ have the same eigenvalues, so
%the eigenvalue condition on $\UU^{-1}\YY$ applies equally to $\YY\UU^{-1}$.
If we also have that both $\UU$ and $\YY$ are symmetric, then
$(\II+\YY\UU^{-1})^{-1/2}$ is the transpose of
$(\II + \UU^{-1}\YY)^{-1/2}$, as proved in~\cite{CGM}.

Let $\VV=(v_{jk})$ be the symmetric matrix such that the
quadratic terms of~\eqref{Gy} are $-\yvec\VV\yvec^T$.
We have for all $j\ne k$ that
\begin{align*}
   v_{jj} &= An + \OO(n^{1/2}), \\
   v_{jk} &= Ax_{jk} + \OO(n^{-1/2}).
\end{align*}
Apply Lemma~\ref{diagonalize} with $\VV=\UU+\YY$ where $\UU$
is the diagonal matrix with the same diagonal entries as~$\VV$.
The matrix $\UU^{-1}\YY$ has $jk$-entry equal to
$n^{-1}x_{jk}+\OO(n^{-3/2})$.
Therefore, since the $\infty$-norm (maximum row
sum of absolute values) of $\UU^{-1}\YY$ is $\OO(n^{-1/2})$,
the eigenvalues of $\UU^{-1}\YY$ are all $\OO(n^{-1/2})$.

Let $T_2$ be the transformation given by $T_2(\yvec)=\zvec$,
where $\zvec^T=(\II+\UU^{-1}\YY)^{-1/2}\yvec^T$.
By~\cite[Lemma 2]{CGM}, the Jacobian of $T_2$ is
$1+\OO(n^{-1/2})$.  Expanding $(\II+\UU^{-1}\YY)^{-1/2}$
we find that for $\yvec\in O(1)\R$,
\begin{equation}\label{T2}
  y_j = z_j + \sum_{k=1}^n \(\OO(n^{-3/2})z_k 
        + \OO(n^{-1})x_{jk}\) z_k,
\end{equation}
for each $j$, where the coefficients are uniform and independent
of $\zvec$.  An expression of identical form writes
$\zvec$ in terms of~$\yvec$.
For $\yvec\in O(1)\R$, we find that $G(\yvec)=H(\zvec)$ where
\begin{align}\label{Hz}
\begin{split}
  H(\zvec) &= -\sum_j \((n-2)A+\alpha_{j\c}-Ax_j\) z_j^2
      - i\sum_j\(nA_3+\OO(n^{1/2})\)z_j^3 \\
  &{\quad} -i\,\sumppd_{j,k} \( 3A_3(1-2c) + \OO(n^{-1/2})\) z_jz_k^2
   - i\,\sumppd_{j,k,\ell} \OO(n^{-1}) z_jz_kz_\ell\\
  &{\quad} + nA_4\sum_j \(1+\OO(n^{-1})\) z_j^4
       + 3A_4 \sumppd_{j,k} \(1+\OO(n^{-1})\) z_j^2z_k^2 \\
  &{\quad} + \sumppd_{j,k} \OO(1) z_jz_k^3
      - \sumppd_{j,k,\ell} \OO(n^{-1}) z_jz_kz_\ell^2 
   + \sumppd_{j,k,\ell,m} \OO(n^{-2}) z_jz_kz_\ell z_m
      + \OO(n^{-1/2}),
\end{split}
\end{align}
with only the final expression of the
form $\OO(\,)$ being a function of~$\zvec$.

Now define $\S=T_1^{-1}(T_2^{-1}(2\R))$.  By~\eqref{Rbounds}
and~\eqref{T2}, $\R\subseteq\S\subseteq 4\R$.  Consequently
the conditions for our approximations are satisfied
and~\eqref{Hz} is valid for $\zvec\in 2\R$.

We can now apply Theorem~\ref{MW3} (see Appendix) to estimate
the integral of $H(\zvec)$ over~$2\R$.  We list the coefficients
required.
\begin{align*}
  \mwA &= A
   & \mwD_{jk\ell} &= \OO(1)\\ 
  \mwn &= n
   & \mwE_j &= A_4+\OO(n^{-1})\\
  \mwJ_j &= 0
   & \mwF_{jk} &= 3A_4+\OO(n^{-1}) \displaybreak[0]\\
  \mwa_j &= (2A-\alpha_{j\c}+Ax_j) n^{-1/2}
   & \mwG_{jk} &= \OO(n^{-1/2}) \displaybreak[0]\\
  \mwB_j &= -i A_3 + \OO(n^{-1/2})
   & \mwH_{jk\ell} &= \OO(n^{-1/2})\\
  \mwC_{jk} &= -3(1-\sqrt2) i A_3 + \OO(n^{-1/2})
   & \mwI_{jk\ell m} &= \OO(n^{-1/2})
\end{align*}
We can take $\Deltait=\tfrac34$ and have $\delta(\zvec)=\OO(n^{-1/2})$.
Applying Theorem~\ref{MW3}, we find that
\begin{equation}\label{Hint}
\int_{2\R} H(\zvec)\,d\zvec = \Bigl(\frac{\pi}{An}\Bigr)^{\!\!n/2}
  \exp\Bigl( -\frac{1-20A}{24A} + \frac{\lambda X}{(1-\lambda)n}
   + \frac{R_2}{16A^2n^2} + \OO(n^{-1/2})\mwZ\Bigr),
\end{equation}
where
\[
   \mwZ = \exp\Bigl( \frac{(1-2\lambda)^2}{6A} \Bigr).
\]
{}From the conditions of Theorem~\ref{bigtheorem}, we have that
$\OO(n^{-1/2})\hat Z=O(n^{-b})$.  Lemma~\ref{Fintegral} now follows
on recalling that the Jacobian determinants of $T_1$ and $T_2$
are $\sqrt2+\OO(n^{-1/2})$ and $1+\OO(n^{-1/2})$, respectively.

\nicebreak
\subsection{Bounding the remainder of the integral}\label{s:boxing}

In the previous section, we estimated the value of the integral
$I(\dvec,\XX)$ restricted to a small region $\S\supseteq\R$.
As mentioned
earlier, the integral over $\S+(\pi,\ldots,\pi)$ is the same.
It remains to bound the integral over the remaining parts of
$[-\pi,\pi]^n$.
Define $\R^c=[-\pi,\pi]^n \setminus \( \R\cup (\R+(\pi,\ldots,\pi)\)$.
By employing the same technique as in~\cite{GMX}, but with
the dissection of the region utilised in~\cite{MWreg}, we can
establish the following.  We will omit the proof since no new
techniques are required, but we note for convenience that
\[
 \abs{F(\thetavec)} = \prod_{jk\in\barXX} f_{jk}(A+\alpha_{jk},\theta_j+\theta_k)
\]
where
\[
 f_{jk}(q,z) = \sqrt{1-4q(1-\cos z)}
   \le \exp\( -qz^2 + \dfrac{1}{12}qz^4\).
\]

\begin{lemma}\label{boxing}
  Under the conditions of Theorem~\ref{bigtheorem},
  \[
   \int_{\R^c} \abs{F(\thetavec)}\,d\thetavec
    = O(n^{-1}) \int_{\S} F(\thetavec)\,d\thetavec.
  \]
\end{lemma}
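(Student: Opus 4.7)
The plan is to bound $\int_{\R^c}\abs{F(\thetavec)}\,d\thetavec$ by dissecting $\R^c$ into a polynomial family of subregions indexed by the distribution of magnitudes of the coordinates $\theta_1,\ldots,\theta_n$, and applying the pointwise bound $\abs{F(\thetavec)} = \prod_{jk\in\barXX} f_{jk}(A+\alpha_{jk},\theta_j+\theta_k)$. For $|\theta_j+\theta_k|\le\pi/2$ we have the Gaussian-type inequality $f_{jk}(q,z)\le \exp(-qz^2+\tfrac{1}{12}qz^4)$, while for $|z|$ bounded away from $0$ and $\pm 2\pi$ we have a crude uniform bound $f_{jk}(q,z)\le 1-c$ with $c>0$ depending only on $\lambda$. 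The translation symmetry $F(\thetavec+(\pi,\ldots,\pi))=F(\thetavec)$ reduces the problem to a fundamental domain around the origin.

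First I would classify each coordinate as ``small'' if $|\theta_j|\le n^{-1/2+\eps}$, ``huge'' if $|\theta_j|$ exceeds a small absolute constant, and ``intermediate'' otherwise, and partition the reduced fundamental domain accordingly. Following the dissection in \cite{MWreg}, I would refine the intermediate range dyadically into bands of the form $\tau_\ell\le\max_j|\theta_j|\le 2\tau_\ell$, with $\tau_\ell$ ranging geometrically from $n^{-1/2+\eps}$ to a constant. A counting argument shows that if $m$ coordinates have magnitude exceeding $\tau$, then a positive fraction of the pairs $jk\in\barXX$ satisfies $|\theta_j+\theta_k|=\Omega(\tau)$, yielding decay of order $\exp(-\Omega(Anm\tau^2))$ in the product. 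Since $\tau\ge n^{-1/2+\eps}$, this Gaussian gain overwhelms the slice volume $\binom{n}{m}\tau^m\pi^{n-m}$ by a super-polynomial factor, so each band contributes at most a negligible fraction of the main estimate in Lemma~\ref{Fintegral}.

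The main obstacle lies in the ``huge'' regime where many coordinates have constant magnitude and the Gaussian bound is insufficient, forcing reliance on the multiplicative bound $f_{jk}\le 1-c$. Since only $X=O(n^{1+2\eps})$ pairs are missing from $\barXX$, the product over $\barXX$ yields a factor of order $(1-c)^{\Theta(n^2)}$ once a positive fraction of coordinates are huge, which easily dominates the $(2\pi)^n$ total volume together with any polynomial in $n$. Matching to the shifted saddle at $(\pi,\ldots,\pi)$ via the translation symmetry accounts for coordinates near $\pm\pi$, and the ``intermediate boundary'' between the two saddles is absorbed into the dyadic bands of the previous step.

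The correction from $\alpha_{jk}=\OO(n^{-1/2})$ only perturbs the constants $A+\alpha_{jk}$ in the exponent by a factor $n^{\OO(1)}$, and the number of pattern types in the classification is at most $3^n$, both absorbed by the exponential gain. With these pieces in place, the proof follows the technique of \cite{GMX} essentially verbatim, and the resulting bound comfortably achieves the desired factor $O(n^{-1})$ relative to the main integral.
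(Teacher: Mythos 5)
Your overall plan---dissect $\R^c$ according to the pattern of coordinate magnitudes, use $f_{jk}(q,z)\le\exp(-qz^2+\frac1{12}qz^4)$ where the pair sums are small and a crude bound $f_{jk}\le 1-c$ where they are not, and reduce to a fundamental domain via the symmetry $\thetavec\mapsto\thetavec+(\pi,\ldots,\pi)$---is exactly the strategy the paper has in mind (it omits the proof, citing the technique of Greenhill--McKay with the dissection of McKay--Wormald), so in spirit you are on the right track. The genuine gap is in the intermediate bands: the quantity you must beat is $\int_\S F(\thetavec)\,d\thetavec$, which by Lemma~\ref{Fintegral} is of order $(\pi/(An))^{n/2}e^{O(1)}$, i.e.\ roughly $e^{-(n/2)\log n}$; it is nothing like the volume of the cube. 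Hence showing that the gain $\exp(-\Omega(Anm\tau^2))$ overwhelms the slice volume $\binom{n}{m}\tau^m\pi^{n-m}$ ``by a super-polynomial factor'' does not show the band is a negligible fraction of the main estimate. Concretely, for $m=1$ and $\tau=n^{-1/2+\eps}$ your stated bound is about $(2\pi)^{n-1}\,n\tau\exp(-\Omega(An^{2\eps}))$, which exceeds $(\pi/(An))^{n/2}$ by a factor $e^{\Theta(n\log n)}$. To close this you must retain the Gaussian decay from the pairs among the \emph{small} coordinates as well, so that integrating over them reproduces a factor of order $(\pi/(An))^{(n-m)/2}$, and only then weigh the extra decay from the $m$ large coordinates against the binomial and volume factors in those $m$ coordinates --- equivalently, compare each band to the main integral by a ratio, not to the raw volume. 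This comparison-to-the-main-term structure is the heart of the omitted argument and is missing from your sketch.

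A secondary point: in the ``huge'' regime, the bound $f_{jk}\le 1-c$ needs $\abs{\theta_j+\theta_k}$ bounded away from both $0$ and $\pm2\pi$, and ``a positive fraction of coordinates are huge'' does not by itself produce a positive fraction of such pairs (all huge coordinates clustered near $\pi$ give pair sums near $2\pi$ and no decay at all --- that is precisely the second saddle). The fundamental-domain reduction has to be set up so that, after it, one can genuinely count $\Theta(n^2)$ pairs with sums away from $0$ and $\pm2\pi$; and since $c$ decays like $\lambda$, which is only $\Omega(1/\log n)$ here, the resulting $\exp(-\Theta(n^2/\log n))$ decay should be compared against $e^{-(n/2)\log n}$ (it does suffice, but the comparison should be made against the main term, not against $(2\pi)^n$).
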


\nicebreak
\section*{Appendix: The value of an integral}\label{s:appendix}

In this appendix we give the value of a certain
multi-dimensional integral.
A similar integral appeared in~\cite{MWreg} and variations of
it appeared in~\cite{Mtourn,euler,MWtournament,GMW}.

This appendix is notationally independent of the rest of the paper.
Summations without explicit limits are over $1,2,\ldots,N$ for
each of the summation indices. A prime on the summation symbol (as
$\sumpp\,)$ indicates that only terms with distinct values of the
summation indices are included.

\begin{thm}\label{MW3}
Let $\eps', \eps'', \eps''', \bar\eps,\check\eps,\Deltait$
be constants
such that $0<\eps'<\eps''<\eps'''$, $\check\eps>0$,
$\bar\eps\ge 0$, and $0<\Deltait<1$.
The following is true if\/ $\eps'''$
and $\bar\eps$ are sufficiently small.\endgraf
Let $\mwA=\mwA(N)$ be a real-valued function such that
$\mwA(N)=\Omega(N^{-\eps'})$.
For $1\le j,k,\ell,m$, let $\mwa_j$, $\mwB_j$,
$\mwC_{jk}$, $\mwD_{jk\ell}$, $\mwE_j$,
$\mwF_{jk}$, $\mwG_{jk}$, $\mwH_{jk\ell}$,
$\mwI_{jk\ell m}$, and $\mwJ_j$ be
complex-valued functions of $N$
such that $\mwa_j,\mwB_j,\ldots,\mwJ_j=O(N^{\bar\eps})$
uniformly over $j,k,\ell,m$.
Suppose that
\begin{align*}
f(\zvec) &= \exp\biggl(
   -\mwA N\sum_j z_j^2
   + \sum_j \mwJ_j z_j 
   + N^{1/2}\sum_j \mwa_j z_j^2 
   + N \sum_j \mwB_j z_j^3
   + \sumppd_{j,k} \mwC_{jk}\,z_j z_k^2 \\
&\kern14mm{}
   + N^{-1} \sumppd_{j,k,\ell} \mwD_{jk\ell}\,z_jz_kz_\ell
   + N \sum_j \mwE_j z_j^4
   + \sumppd_{j,k} \mwF_{jk}\,z_j^2 z_k^2
   + N^{1/2}\sumppd_{j,k} \mwG_{jk} z_j z_k^3 \\
&\kern14mm{}
   + N^{-1/2}\sumppd_{j,k,\ell} \mwH_{jk\ell}\,z_jz_kz_\ell^2
   + N^{-3/2}\sumppd_{j,k,\ell,m} \mwI_{jk\ell m}\,z_jz_kz_\ell z_m
   + \delta(\zvec) \biggr)
\end{align*}
is integrable for $\zvec=(z_1,z_2,\ldots,z_N)\in U_N$
and\/ $\delta(N)=\max_{\zvec\in U_N} \abs{\delta(\zvec)} = o(1)$,
where
\[
U_N = \bigl\{ \zvec \subseteq\Reals^N\Suchthat
              \abs{z_j} \le N^{-1/2+\hat\eps}
  \text{ for\/ $1\le j\le N$}\bigr\},
\]
where $\hat\eps=\hat\eps(N)$ satisfies
$\eps''\le2\hat\eps\le\eps'''$.
Then, provided the $O(\,)$ term in the following
converges to zero,
\[
 \int_{U_N}f(\zvec)\,d\zvec
  = \biggl(\frac{\pi}{\mwA N}\biggr)^{\!N/2}
   \!\exp\( \Thetait_1
     +O\(N^{-1/2+\check\eps}+(N^{-\Deltait}+\delta(N)) \mwZ\)
     \),
\]
where
\begin{align*}
  \Thetait_1 &= \frac{1}{2\mwA N^{1/2}}\sum_j\mwa_j
     + \frac{1}{4\mwA^2N}\sum_j\mwa_j^2
     + \frac{15}{16\mwA^3N}\sum_j\mwB_j^2
     + \frac{3}{8\mwA^3N^2}\sumppd_{j,k}\mwB_j\mwC_{jk} \\
  &\quad{} 
  + \frac{1}{16\mwA^3N^3}\sumppd_{j,k,\ell} \mwC_{jk}\mwC_{j\ell}
    + \frac{3}{4\mwA^2N}\sum_j \mwE_j
    + \frac{1}{4\mwA^2N^2}\sumppd_{j,k} \mwF_{jk} 
  \\
  &\quad{} + \frac{4}{\mwA N}\sum_j \mwJ_j^2
    + \frac{3}{4\mwA^2 N}\sum_j \mwB_j \mwJ_j
    + \frac{1}{4\mwA^2N^2}\sumppd_{j,k} \mwC_{j,k}\mwJ_k
  \displaybreak[0]\\
  \mwZ &= \exp\biggl(
       \frac{1}{4\mwA^2N}\sum_j\Im(\mwa_j)^2
     + \frac{15}{16\mwA^3N}\sum_j\Im(\mwB_j)^2 
     + \frac{3}{8\mwA^3N^2}\sumppd_{j,k}\Im(\mwB_j)\Im(\mwC_{jk}) \\
     &\kern15mm{}
     + \frac{1}{16\mwA^3N^3}\sumppd_{j,k,\ell}
             \Im(\mwC_{jk})\Im(\mwC_{j\ell})
     + \frac{1}{4\mwA N}\sum_j \Im(\mwJ_j)^2
              \\
     &\kern15mm{}
        + \frac{3}{4\mwA^2N}\sum_j \Im(\mwB_j)\Im(\mwJ_j)
     + \frac{1}{4\mwA^2N^2}\sumppd_{j,k} \Im(\mwB_{jk})\Im(\mwJ_k)        
   \biggr).
\end{align*}
\end{thm}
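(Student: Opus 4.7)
The strategy is the standard Laplace/saddle-point method adapted to perturbations around a diagonal Gaussian. The dominant factor is $\exp(-\mwA N\sum_j z_j^2)$, and everything else in the exponent is a small perturbation whose exponential can eventually be expanded, integrated against this Gaussian measure, and re-summed. Throughout $U_N$ the variables satisfy $|z_j|\le N^{-1/2+\hat\eps}$ and the coefficients satisfy $O(N^{\bar\eps})$ bounds, so each individual monomial appearing in the perturbation is small when $\bar\eps,\hat\eps$ are sufficiently small, but the total non-Gaussian exponent may be large, so I would keep it inside the exponential until it has been controlled.

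First I would complete the square in each variable separately, combining $-\mwA N z_j^2+N^{1/2}\mwa_j z_j^2+\mwJ_j z_j$ into $-(\mwA N-N^{1/2}\mwa_j)(z_j-s_j)^2+c_j$ with $s_j=\mwJ_j/(2(\mwA N-N^{1/2}\mwa_j))=O(N^{-1+\bar\eps})$ and $c_j=\mwJ_j^2/(4(\mwA N-N^{1/2}\mwa_j))$. The tiny shifts $z_j\mapsto z_j+s_j$ leave the integration domain essentially unchanged, and $\sum_j c_j$, after expansion in $N^{-1/2}\mwa_j/\mwA$, produces the $\mwJ_j^2$ and $\mwa_j\mwJ_j$ contributions to $\Thetait_1$. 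Next I would rescale $w_j=(\mwA N-N^{1/2}\mwa_j)^{1/2}z_j$ so that the quadratic form becomes exactly $-\sum_j w_j^2$; the logarithm of the Jacobian, $-\tfrac12\sum_j\log(1-N^{-1/2}\mwa_j/\mwA)$, expanded to two orders in $N^{-1/2}$, gives exactly $\frac{1}{2\mwA N^{1/2}}\sum_j\mwa_j+\frac{1}{4\mwA^2N}\sum_j\mwa_j^2$. Simultaneously, the shift and rescaling introduce lower-order corrections in the cubic/quartic coefficients $\mwB,\mwC,\mwD,\mwE,\mwF,\mwG,\mwH,\mwI$, which I would absorb into the error.

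After this preparation the integral has the form $\int e^{-\mwA N\sum_j z_j^2}e^{\Phi(\zvec)}\,d\zvec$ multiplied by explicit factors, where $\Phi$ collects the remaining cubic, quartic, and higher terms together with $\delta(\zvec)$. I would Taylor-expand $e^\Phi=\sum_{k\ge0}\Phi^k/k!$, truncating at a finite order high enough that the neglected tail is $O(N^{-1/2+\check\eps})$, and integrate each term using the product form of the Gaussian measure together with the one-dimensional moments $\int_{-\infty}^\infty z^{2\ell}e^{-\mwA Nz^2}\,dz=(\pi/\mwA N)^{1/2}(2\ell-1)!!/(2\mwA N)^\ell$, with odd moments vanishing. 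Extending the integration from $U_N$ to $\Reals^N$ is harmless because $\exp(-\mwA N\cdot N^{-1+2\hat\eps})$ beats every polynomial. At the order required, only a short list of terms survive: the quartic monomials $\mwE_j z_j^4$ and $\mwF_{jk}z_j^2z_k^2$ contribute through $\expect[\Phi]$, while the $\Phi^2/2$ expansion contributes through the pairings $\mwB_j^2$, $\mwB_j\mwC_{jk}$, $\mwC_{jk}\mwC_{j\ell}$ as well as the cross terms with the $\mwJ$'s propagated from step~1. Power-counting, routine if tedious, shows that every other surviving combination is $O(N^{-1/2+\check\eps})$.

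The main obstacle is the presence of complex coefficients $\mwa_j,\mwB_j,\mwC_{jk},\ldots,\mwJ_j$, which means that the perturbation $\Phi$ need not be dominated in absolute value by a small real number. One cannot simply bound $|e^{\Phi}|$ by $e^{\Re\Phi}$ times a small factor: the imaginary parts, when squared inside the $\Phi^2$ expansion used for the tail, yield genuinely real contributions that cannot be cancelled by oscillation. These are exactly what is collected into the factor $\mwZ$, forcing the error bound to be stated with a $\mwZ$ multiplier on the $N^{-\Deltait}+\delta(N)$ tail; the hypothesis that the displayed $O(\cdot)$ tends to zero then supplies the required smallness. Handling this obstacle amounts to bounding $|f(\zvec)|$ by the exponential of the real part of the quadratic form plus a controlled remainder, estimating the truncated Taylor tail of $e^\Phi$ in terms of $\mwZ$ using Hölder-type inequalities for Gaussian moments, and checking that each pairing in $\expect[\Phi^k/k!]$ either matches one of the terms of $\Thetait_1$ or fits inside $O(N^{-1/2+\check\eps}+(N^{-\Deltait}+\delta(N))\mwZ)$.
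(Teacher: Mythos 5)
Your overall strategy---complete the square in the linear terms, rescale away the $N^{1/2}\mwa_jz_j^2$ perturbation of the quadratic form, and then evaluate Gaussian moments of the remaining cubic and quartic terms---is the same perturbed-Gaussian method that the paper relies on; note that the paper gives no details at all here, deferring entirely to the technique of~\cite{CGM}. Your bookkeeping of which pairings survive is consistent with $\Thetait_1$, and in fact your completion of the square yields $\frac{1}{4\mwA N}\sum_j\mwJ_j^2$ rather than the stated $\frac{4}{\mwA N}\sum_j\mwJ_j^2$; since your value matches the corresponding term of $\mwZ$, the coefficient in $\Thetait_1$ is almost certainly a typo in the statement rather than an error on your part.

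The genuine gap is the step ``Taylor-expand $e^\Phi$, truncating at a finite order high enough that the neglected tail is $O(N^{-1/2+\check\eps})$.'' This cannot be done pointwise: on the boundary of $U_N$ the perturbation $\Phi$ can be as large as $N^{1/2+O(\bar\eps+\hat\eps)}$ (already $N\sum_j\mwB_jz_j^3$ contributes up to $N\cdot N\cdot N^{\bar\eps}\cdot N^{-3/2+3\hat\eps}$, and similarly $\sum_j\mwJ_jz_j$, $N^{1/2}\sum_j\mwa_jz_j^2$ and the off-diagonal double and triple sums), so no finite truncation has a small remainder in sup norm, and the tail of $e^\Phi$ is not controlled by any fixed power of $\Phi$. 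What makes the method work is that $\Phi$ concentrates under the Gaussian measure: its mean and variance are $N^{O(\eps)}$ even though its supremum is not, and the correct device is a cumulant-type estimate $\expect e^{W}=\exp\bigl(\expect W+\tfrac12\Var W+\cdots\bigr)$ with the error controlled by higher central moments of $W$---this is the lemma from \cite{CGM} (descending from \cite{MWtournament}) that the present paper is implicitly invoking. That estimate is also exactly where $\mwZ$ originates: it is essentially $\exp\bigl(\tfrac12\Var(\Im W)\bigr)$, the unavoidable price of the oscillatory part, rather than something recovered afterwards by H\"older inequalities on a truncated Taylor tail. So you have identified the right obstacle (the imaginary parts) but not supplied the mechanism that overcomes the more basic one (the size of $\Phi$); as written, the truncation step fails and must be replaced by the expectation/variance expansion.
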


\begin{proof}
The method of proof is the same as in~\cite{CGM}, with extra
terms added.
To simplify the process, we did not explicitly compute the
lower order terms which are presented as~$\Thetait_2$ in~\cite{CGM}.
The details will be omitted.
\end{proof}

\nicebreak

\end{document}